\algnewcommand\algorithmicinput{\textbf{Input:}}
\algnewcommand\Input{\item[\algorithmicinput]}
\algnewcommand\algorithmicoutput{\textbf{Output:}}
\algnewcommand\Output{\item[\algorithmicoutput]}
\newtheorem{theorem}{Theorem}
\newtheorem{lemma}[theorem]{Lemma}
\newtheorem{remark}{Remark}
\newcommand{\R}{\mathbb R}
\newcommand{\N}{\mathbb N}
\newcommand{\range}{\mathrm{range}}
\newcommand{\HS}{\mathrm{HS}}
\newcommand{\Id}{\mathrm{Id}}
\newcommand{\F}{\mathrm{F}}
\newcommand{\op}{\mathrm{op}}
\DeclareMathOperator{\diag}{diag}
\DeclareMathOperator{\Tr}{Trace}
\DeclareMathOperator{\T}{Tr}
\DeclareMathOperator{\spann}{span}
\newcommand{\rev}[1]{{\color{black} #1}}
\numberwithin{equation}{section}
\title{On the randomized SVD in infinite dimensions}
\author{Daniel Kressner\thanks{Institute of Mathematics, EPF Lausanne, 1015 Lausanne, Switzerland.} \and David Persson\thanks{Courant Institute of Mathematical Sciences, New York University, New York, NY 10012, USA \& Center for Computational Mathematics, Flatiron Institute, New York, NY 10010, USA.} \and Andr\'e Uschmajew\thanks{Institute of Mathematics \& Centre for Advanced Analytics and Predictive Sciences, University of Augsburg, 86159 Augsburg, Germany.}
}
\date{}
\begin{document}

\maketitle

\begin{abstract}
Randomized methods, such as the randomized SVD (singular value decomposition) and Nystr\"om approximation, are an effective way to compute low-rank approximations of large matrices. Motivated by applications to operator learning, Boull\'e and Townsend (FoCM, 2023) recently proposed an infinite-dimensional extension of the randomized SVD for a Hilbert--Schmidt operator $\mathcal A$ that invokes randomness through a Gaussian process with a covariance operator $\mathcal K$. While the non-isotropy introduced by $\mathcal K$ allows one to incorporate prior information on $\mathcal A$, an unfortunate choice may lead to unfavorable performance and large constants in the error bounds. In this work, we introduce a novel infinite-dimensional extension of the randomized SVD that does not require such a choice and enjoys error bounds that match those for the finite-dimensional case. 
\rev{Our extension implicitly uses isotropic random vectors, reflecting a choice commonly made in the finite-dimensional case.}
%Moreover, it reflects the common practice of using the randomized SVD with isotropic random vectors, also when approximating discretized operators.
In fact, the theoretical results of this work show how the usual randomized SVD applied to a discretization of $\mathcal A$ approaches our infinite-dimensional extension as the discretization gets refined, both in terms of error bounds and the Wasserstein distance.
We also present and analyze a novel extension of the Nystr\"om approximation for self-adjoint positive semi-definite trace class operators.\\\\
\textbf{Key words.} Low-rank approximation, Randomized numerical linear algebra, Hilbert-Schmidt operators, Trace-class operators \\\\
\textbf{MSC Classes.} 65F55, 65F60, 68W20
\end{abstract}

\section{Introduction}

Randomized methods, such as the randomized SVD~\cite{rsvd} and (generalized) Nystr\"om methods~\cite{gittensmahoney,nakatsukasa2020fast,Tropp2017}, are an effective way to compute a low-rank approximation of a (large) matrix $\rev{A} \in \R^{m\times n}$. Given a random matrix $\Omega \in \R^{n \times r}$ with $r \ll n$, the basic form of the randomized SVD proceeds by first computing the sketch $Y = \rev{A} \Omega$. Using a QR factorization of $Y$, it then computes an orthonormal basis 
$Q \in \R^{m \times r}$ and returns the rank-$r$ approximation
\begin{equation}
 \label{eq:randsvd}
 \rev{A} \approx Q \big( \rev{A}^* Q \big)^*
\end{equation}
in which the columns of $\rev{A}$ have been projected onto the range of $Y$. Let $r = k + p$ for a small oversampling parameter $p \ge 4$. Then the analysis in~\cite[Sec. 10]{rsvd} shows that
the approximation error of~\eqref{eq:randsvd} is not significantly larger than the \emph{best} rank-$k$ approximation error, with high probability,
when choosing $\Omega$ to be a Gaussian random matrix. As discussed in~\cite{randlapack,rsvd}, other random matrix models can lead to faster algorithms,  but they typically result in weaker theoretical guarantees.
 
This paper is concerned with an infinite-dimensional extension of the randomized SVD and Nyström approximation. 
Concretely, we consider a linear operator $\mathcal A: H_1 \to H_2$ between separable Hilbert spaces $H_1,H_2$. The prototypical example is an integral operator of the form
\begin{equation} \label{eq:integraloperator}
 \mathcal A: L^2(D_y) \to L^2(D_x) \quad \mathcal A: f(y) \mapsto g(x):= \int_{D_y} \kappa(x,y) f(y)\,\mathrm{d} y
\end{equation}
on some domains $D_x, D_y$.
If the integral kernel satisfies $\kappa \in L^2(D_x \times D_y)$ then $\mathcal A$ is a Hilbert--Schmidt operator and admits an infinite SVD~\cite{functionaldatanalysis}, which in fact corresponds to an SVD of the integral kernel $\kappa(x,y)$~\cite{Schmidt1907}. Low-rank approximations of $\mathcal A$ play an important role in the context of Gaussian process regression \cite{bach2017equivalence}, PDE learning \cite{boulle2022learning}, and Bayesian inverse problems \cite{saibabarandom,alexanderian2014optimal}.
In practice, the randomized SVD is of course applied to a discretization of $\mathcal A$. Nevertheless, from a theoretical point of view, it is of interest 
to study an appropriate infinite-dimensional extension of the randomized SVD that applies directly to $\mathcal A$. Among others, this can provide important insights into the behavior of the randomized SVD as the discretization gets refined.

It seems plausible that any sensible extension of the randomized SVD starts with sketching $\mathcal A$ by applying it to $r$ independent random elements from the Hilbert space  $H_1$. This raises the question of what is an appropriate extension of a Gaussian random vector in $H_1$.
When $H_1$ is a function space, like in~\eqref{eq:integraloperator}, it is natural to draw the random elements from a Gaussian process with zero mean and covariance operator $\mathcal K: H_1 \to H_1$. This is the approach proposed and analyzed by Boull\'e and Townsend in~\cite{boulle2022learning}; see also \cite{perssonboullekressner2025}.
Both, from a theoretical and practical point of view, a certain degree of regularity / eigenvalue decay must be imposed on $\mathcal K$. Therefore, the discretization of such a Gaussian process inevitably yields non-standard multivariate Gaussian random vectors. The use of such non-isotropic random vectors affects the analysis of the randomized SVD in finite and infinite dimensions. In particular, the stochastic error bounds~\cite[Theorem 1]{boulle2022learning},~\cite[Theorem 2]{boulleiclr}, and~\cite[Theorem 3.3]{perssonboullekressner2025} feature additional constants that quantify the interaction between $\mathcal K$ and the right singular vectors\footnote{\rev{Throughout this work, we use the term \emph{vector} to refer to an element in a Hilbert space, even when the underlying space is a function space, such as $L^2(D)$ for some domain $D$.}} of $\mathcal A$. On the one hand, one might argue, as in~\cite[Sec. 3]{boulleiclr}, that the presence of $\mathcal K$ allows one to incorporate prior information on $\mathcal A$ and the experiments in~\cite[Sec. 5.1]{boulleiclr} demonstrate some benefit of doing so. On the other hand, non-standard multivariate Gaussian random vectors can be more expensive to sample, sometimes significantly so. More importantly, imposing too much regularity on $\mathcal K$ (relative to $\mathcal A$) leads to unfavorable constants in the error bounds and degrades the performance of the randomized SVD. Indeed, the experiments in~\cite[Sec. 5.2]{boulleiclr} clearly reveal the importance of choosing a covariance operator $\mathcal K$ with slow eigenvalue decay. 

It is important to stress that the randomized SVD is routinely applied with \emph{isotropic}
Gaussian random vectors in the context of discretized infinite-dimensional linear operators; see~\cite{Levitt2024,Lin2011,Lin2017,Saibaba2021} for examples.
Even Chebfun \cite{chebfun} deviates from its principles and does not impose decay on the Chebyshev coefficients when sketching with random Chebfun objects.\footnote{See~\url{https://www.chebfun.org/examples/ode-eig/ContourProjEig.html} for an example.}
The main goal of the present work is to derive a framework that captures these uses of the randomized SVD in the infinite-dimensional limit.

\rev{\subsection{Contribution and outline}}
\rev{The contributions of this work can be summarized as follows:
\begin{enumerate}
    \item[(i)] In \Cref{section:randomizedSVD}, we show that the standard randomized SVD for matrices, which uses isotropic random vectors, admits a natural infinite-dimensional extension to Hilbert--Schmidt operators $\mathcal A: H_1 \to H_2$. This version, presented as \Cref{alg:infrsvd}, implicitly works with isotropic Gaussian random vectors by sampling the columns of the \emph{sketch} (instead of $\Omega$) from the centered Gaussian measure in $H_2$ associated with the covariance operator $\mathcal A \mathcal A^*$. This bypasses the need for choosing a covariance kernel $\mathcal K$ in $H_1$ discussed above and perfectly matches the (standard) randomized SVD in finite dimensions. Indeed, our error analysis (\Cref{lemma:structural} and \Cref{thm:InfRandSVD}) reproduces the results from~\cite[Sec. 10]{rsvd} without any additional constants, in contrast to earlier work~\cite{boulle2022learning,perssonboullekressner2025}.
    \item[(ii)] In \Cref{section:fdapprox}, we study the interplay between discretization and the randomized SVD for the simple case when the operator $\mathcal A$ is compressed to finite dimensions based using given orthonormal bases. We derive a bound that decouples the discretization error from the randomized SVD error (\Cref{thm:main}). These bounds illustrate that---as the discretization is refined---one recovers the classical results from~\cite[Sec. 10]{rsvd} and they motivate the design of a practical, adaptive algorithm, which we present in \Cref{sec:adaptive}.
\item[(iii)] Specifically, in \Cref{thm:convergenceexpect} we prove that the randomized SVD applied to the discretized operator approaches the proposed infinite-dimensional extension from (i) in the Wasserstein distance as the discretization gets refined. 
\item[(iv)] In \Cref{section:nystrom}, we discuss an infinite-dimensional generalization of the Nyström approximation, suitable for self-adjoint positive semi-definite  trace class operators. 
\item[(v)] Finally,~\Cref{sec:numexp} presents numerical experiments illustrating our developments.
\end{enumerate}

Let us reiterate that the main contribution of this work is on the conceptual level; it establishes a connection between the randomized SVD in finite and infinite dimensions, and provides a theoretical justification for what is already common practice.}

\section{Preliminaries}

In this section, we recall well-known  mathematical preliminaries on Hilbert--Schmidt operators, quasi-matrices, and infinite-dimensional analogues of Gaussian random vectors.

\subsection{Hilbert--Schmidt operators and quasi-matrices}\label{section:HS}

Throughout this work, we consider a linear operator $\mathcal{A} : H_1 \to H_2$ between two separable Hilbert spaces $(H_1,\langle\cdot,\cdot\rangle_{H_1})$ and $(H_2,\langle\cdot,\cdot\rangle_{H_2})$.
To avoid trivial cases, we assume that $H_1$ and $H_2$ are infinite-dimensional. Moreover, in this work Hilbert spaces are always defined over $\mathbb{R}$. These assumptions imply that $H_1$ and $H_2$ admit countable orthonormal bases. We say that $\mathcal{A}$ is a Hilbert--Schmidt operator when $\mathcal{A}$ has finite Hilbert--Schmidt norm \cite[Definition 4.4.2]{functionaldatanalysis}:
\begin{equation*}%\label{eq:HS1}
    \|\mathcal{A}\|_{\HS} := \Big(\sum\limits_{i} \|\mathcal{A}e_i \|_{H_2}^2\Big)^{1/2}<\infty,
\end{equation*}
where $\{e_i\}_i$ is \emph{any} orthonormal basis of $H_1$ and $\|\cdot\|_{H_2}$ is the norm induced by $\langle\cdot,\cdot\rangle_{H_2}$.
Hilbert--Schmidt operators are compact \cite[Theorem~4.4.2]{functionaldatanalysis} and therefore admit an SVD \cite[Theorem~4.3.5]{functionaldatanalysis} of the form
\begin{equation}\label{eq:longsvd}
    \mathcal{A} = \sum\limits_{i} \sigma_i\langle v_i,\cdot\rangle_{H_1} u_i.
\end{equation}
The scalars $\sigma_1\geq \sigma_2\geq \cdots\geq 0$ are the singular values of $\mathcal{A}$, the vectors $\{u_i\}_i$ form an orthonormal system of left singular vectors in $H_2$, whereas the vectors $\{v_i\}_i$ from an orthonormal system of right singular vectors in $H_1$. The singular values allow us to express the  Hilbert--Schmidt norm of $\mathcal{A}$ as
\begin{equation}\label{eq:HS2}
    \|\mathcal{A}\|_{\HS} =\Big(\sum_{i} \sigma_i^2\Big)^{1/2},
\end{equation}
which is an infinite-dimensional analogue of the Frobenius norm. In addition, the operator norm of $\mathcal A$ satisfies $$\|\mathcal{A}\|_{\op} := \sup\limits_{ \|f\|_{H_1} = 1}\|\mathcal{A}f\|_{H_2} = \sigma_1.$$

Quasi-matrices are a convenient tool for adopting matrix-like notations when dealing with collections of elements in a Hilbert space;~see~\cite{quasimatrix}. In particular, they allow us to express the SVD~\eqref{eq:longsvd} in a more common and compact form.
For this purpose, we introduce the operator
\[
 U:\ell^2 \to H_2, \quad U: \xi \mapsto \sum\limits_{i} \xi_i u_i,
\]
where $\ell^2$ is the space of square-summable sequences $\xi = \{ \xi_i\}_i$. We can interpret $U$ as an infinite \emph{quasi-matrix} containing the left singular vectors of $\mathcal{A}$ as columns, which we write as $U = \begin{bmatrix} u_1, & u_2, & \cdots \end{bmatrix}$. Defining $V:\ell^2 \to H_2$ analogously and introducing the diagonal operator $\Sigma = \diag(\sigma_1,\sigma_2,\ldots): \ell^2 \to \ell^2$, which multiplies the $i$th entry of a sequence with $\sigma_i$, we can 
write~\eqref{eq:longsvd} as the composition of these three operators:
\[
 \mathcal{A} = U \Sigma V^*,
\]
where 
$V^* : H_1 \to \ell^2$ denotes the adjoint operator, defined as $V^* f = \{\langle v_i, f \rangle_{H_1}\}_i \in \ell^2$.

We will also use quasi-matrices with a finite number of columns. In particular, the quasi-matrix $U_k = \begin{bmatrix} u_1, & \cdots &, u_k \end{bmatrix}$ containing the first $k$ left singular vectors as columns represents the operator $U_k:\R^k\to H_2$, $\xi \mapsto \xi_1 u_1 + \dots + \xi_k u_k$ where $\xi_i$ is the $i$-th entry of $\xi \in \mathbb R^k$. Let $U_k^\perp:\ell^2\to H_2$ denote the quasi-matrix $U_k^\perp = \begin{bmatrix} u_{k+1}, & u_{k+2}, & \cdots \end{bmatrix}$ containing the remaining left singular vectors, and define $V_k$ and $V_k^\perp$ similarly for the right singular vectors. Introducing the diagonal matrix $\Sigma_k = \diag(\sigma_1,\ldots,\sigma_k)$ and operator $\Sigma_k^\perp = \diag(\sigma_{k+1},\sigma_{k+2},\ldots)$ allows us express the SVD~\eqref{eq:longsvd} as
\begin{equation*}%\label{eq:HS_partition}
    \mathcal{A} = U_k \Sigma_k V_k^* + U_k^\perp \Sigma_k^\perp (V_k^\perp)^*,
\end{equation*}
where $V_k^*: H_1 \to \mathbb R^k$, $(V_k^\perp)^* : H_1 \to \ell^2$ are the adjoints of $V_k, V_k^\perp$ defined as above.
In particular, $V_k^* f = [\langle v_1, f \rangle_{H_1},\cdots,\langle v_k, f \rangle_{H_1}]^* \in \mathbb R^k$.

Finally, for a quasi-matrix $F = \begin{bmatrix} f_1, & f_2, & \cdots \end{bmatrix}$, either finite or infinite, we denote by $\mathcal{A}F = \begin{bmatrix} \mathcal A f_1, & \mathcal A f_2, & \cdots \end{bmatrix}$ the quasi-matrix whose $i$-th column is $\mathcal{A}f_i$.

By the the Schmidt-Mirsky theorem, the operator $U_k \Sigma_k V_k^*$ is an optimal approximation to $\mathcal{A}$ in the Hilbert--Schmidt norm by an operator of rank at most $k$, with the error
\[
\| \mathcal A - U_k \Sigma_k V_k^* \|_{\HS}^2 = \sum_{j > k} \sigma_j^2;
\]
see,~e.g.,~\cite[Theorem~4.4.7]{functionaldatanalysis}.

\subsection{Trace-class operators}

Let $\mathcal T : H \to H$ be a Hilbert--Schmidt operator on a separable Hilbert space $(H,\langle\cdot,\cdot\rangle_H)$ with SVD $\mathcal T = U \Sigma V^*$. Then $\mathcal T$ is called a trace-class or (nuclear) operator if it has finite trace norm $\|\mathcal{T}\|_{\T}:=\sigma_1 + \sigma_2 + \cdots < \infty$. In this case, the trace of $\mathcal T$ is well defined and given by
\begin{equation}\label{eq:tracedef}
    \Tr(\mathcal{T}):= \sum\limits_{i} \langle e_i, \mathcal{T}e_i \rangle_{H},
\end{equation}
where $\{e_i\}_i$ is an orthonormal basis of $H$ and the value of the trace is independent of the choice of this basis. 
By the Schmidt-Mirsky theorem, the rank-$k$ approximation $U_k \Sigma_k V_k^*$ to $\mathcal T$ is also optimal in the trace norm, with the error
\[
\| \mathcal T - U_k \Sigma_k V_k^* \|_{\T} = \sigma_{k+1} + \sigma_{k+2} + \cdots.
\]

We will focus on the special case when \rev{$\mathcal{T}$} is self-adjoint positive semi-definite (SPSD). In this case, one can choose $U = V$, the singular values coincide with the eigenvalues, and 
\begin{equation}\label{eq:trace}
    \|\mathcal{T}\|_{\T} = \Tr(\mathcal{T}).
\end{equation}
For an SPSD trace-class operator $\mathcal{T}$ one can define the (principal) square root $\mathcal{T}^{1/2}$ through the spectral decomposition $\mathcal{T}^{1/2} = U \Sigma^{1/2} U^*$, where $\Sigma^{1/2} = \diag(\sigma_1^{1/2},\sigma_2^{1/2},\ldots)$. 
%In particular, in view of \eqref{eq:HS1} and \eqref{eq:HS2} it implies

Finally, we note that for any Hilbert--Schmidt operator $\mathcal A : H_1 \to H_2$, the operators $\mathcal A^* \mathcal A$ and $\mathcal A \mathcal A^*$ are SPSD trace-class operators on $H_1$ and $H_2$, respectively, and satisfy, in view of \eqref{eq:trace} and \eqref{eq:HS2},
\begin{equation*}
    \|\mathcal{A}\|_{\HS} = \Tr(\mathcal{A}^*\mathcal{A})^{1/2} = \Tr(\mathcal{A}\mathcal{A}^*)^{1/2}.
\end{equation*} 

\subsection{Random elements in Hilbert spaces}\label{section:random}

In the sequel, we will consider Gaussian random variables in a separable Hilbert space $H$. Recall that for every SPSD trace class operator $\mathcal T : H \to H$ there exists a unique centered Gaussian measure $\mu_{\mathcal T}$ on $H$ defined on the Borel sets $\mathcal B(H)$ with mean zero and covariance operator $\mathcal T$~\cite[Theorem~1.12]{DaPrato2006}. We will say that a random variable $z$ with values in $H$ whose distribution is the measure $\mu_{\mathcal T}$ is distributed as $\mathcal{N}_{H}(0,\mathcal{T})$. The main property of this measure that we will use is the following~\cite[Corollary~1.19 \& Example~1.22]{DaPrato2006}. Let $u_1,u_2,\dots$ be an orthonormal basis of eigenvectors of $\mathcal T$ with corresponding eigenvalues $\lambda_1 \ge \lambda_2 \ge \dots \ge 0$. If we write the basis expansion of an $\mathcal{N}_{H}(0,\mathcal{T})$ distributed random variable $z$ in the form
\begin{equation}\label{eq: series}
z = \sum\limits_{i} \omega_i \lambda_i^{1/2} u_i
\end{equation}
then for every $n$ the coefficients $\omega_1,\dots,\omega_n$ are i.i.d. $\mathcal N(0,1)$ variables.

We can view the whole sequence $\omega = (\omega_1,\omega_2,\dots) \in \mathbb R^{\mathbb N}$ in~\eqref{eq: series} as an infinite-dimensional analog of a standard Gaussian vector. This notion can be given a precise meaning by equipping $\mathbb R^{\mathbb N}$ with the countable product $\mathbb P$ of centered standard Gaussian measures on $\R$.  This turns $(\mathbb R^{\mathbb N},\mathcal B(\mathbb R^{\mathbb N}), \mathbb P)$ into a probability space, where $\mathcal B(\mathbb R^\N)$ is the Borel $\sigma$-algebra generated by cylindrical sets; see, e.g.,~\cite[2.3.5. Example]{gaussian} or~\cite[Theorem~1.9]{DaPrato2006}. Similar to finite Gaussian vectors, such an infinite sequence $\omega = (\omega_1,\omega_2,\dots)$ of i.i.d.~Gaussians has the property, that any projection onto finitely many components $\omega_i$ yields a random vector with standard normal distribution. Furthermore it follows from Kolmogorov's two series theorem~\cite[1.1.4 Theorem]{gaussian} that the series~\eqref{eq: series} converges both almost surely in $H$ with respect to the measure $\mathbb P$ on $\mathbb R^{\mathbb N}$, as well as in $L_2(\mathbb R^{\mathbb N}, \mathbb P; H)$. We could hence interpret~\eqref{eq: series} as a random variable defined on $(\mathbb R^{\mathbb N},\mathcal B(\mathbb R^{\mathbb N}), \mathbb P)$ with values in~$H$. Since the partial sums $\sum_{i=1}^n \omega_i \lambda_i^{1/2} u_i$ are centered Gaussian variables in $H$ with covariance operator $P_n \mathcal T P_n$, where $P_n$ is the orthogonal projection onto $\spann\{u_1,\dots,u_n\}$, it follows by~\cite[Proposition~1.16]{DaPrato2006} that the random variable $z$ defined in this way has the distribution $\mathcal N(0,\mathcal T)$. We refer to~\cite{gaussian} and~\cite{DaPrato2006} for more details on Gaussian measures and random variables in Hilbert space.

\begin{remark}
The series representation~\eqref{eq: series} is also called the Karhunen-Lo\`eve expansion of a random vector drawn from a Gaussian measure on the Hilbert space $H$~\cite[Theorem 6.19]{stuart2010inverse} and relates to many important special cases. For instance, when $H = L^2(D)$ for some bounded domain $D \subset \mathbb{R}^d$, then~\eqref{eq: series} defines a Gaussian process. The covariance operator $\mathcal T$ of that Gaussian process is an integral operator with the kernel defined defined by the Mercer expansion
\begin{equation*}
    \kappa(x,y) = \sum\limits_{i} \lambda_i u_i(x) u_i(y), \quad x,y \in D;
\end{equation*}
see,~e.g.~\cite[Sec.~3]{adler}.
\end{remark}

\section{Randomized SVD in Hilbert spaces}\label{section:randomizedSVD}

Our extension of the randomized SVD to infinite dimensions is based on the following simple observation.
Given a matrix $\rev{A} \in \R^{m\times n}$ and a Gaussian random matrix $\Omega \in \R^{n\times r}$, the columns of the sketch $Y = A\Omega$ are independent non-standard multivariate Gaussian random vector with mean zero and 
covariance matrix $A A^*$. It turns out that this viewpoint, that the columns of $Y$ are drawn from 
$\mathcal{N}(0,AA^*)$, can be conveniently generalized to the Hilbert space setting
using the construction of random vectors from \Cref{section:random}.

Specifically, consider a Hilbert--Schmidt operator $\mathcal A : H_1 \to H_2$ with singular values $\{\sigma_i\}_i$ and left singular vectors $\{u_i\}_i$. Noting that $\{\sigma_i\}_i\in \ell^2$, the discussion from \Cref{section:random} allows us to define the unique centered Gaussian measure on $H_2$ with covariance operator $\mathcal A \mathcal A^*$. Random vectors $y \in H_2$ distributed according to this Gaussian measure can be written as
\begin{equation}\label{eq:kl_expansion}
 y = \sum_{i} \omega_i \sigma_i u_i, \qquad \omega_i = \frac{1}{\sigma_i} \langle y,u_i \rangle_{H_2} \sim \mathcal N(0,1) \text{ i.i.d.}
\end{equation}
Following our previous terminology, we will say that any vector with the same distribution as $y$ is distributed as $\mathcal{N}_{H_2}(0,\mathcal{A} \mathcal{A}^*)$.

\subsection{Algorithm}

Based on the idea presented above, we arrive at~\Cref{alg:infrsvd}, a direct extension of the randomized SVD to a Hilbert--Schmidt operator. 
\begin{algorithm}[t]
\small
\caption{Infinite-dimensional randomized SVD}
\label{alg:infrsvd}
\begin{algorithmic}[1]
\Input
Hilbert--Schmidt operator $\mathcal{A}:H_1 \to H_2$, target rank $k$, oversampling parameter $p$.
\Output
Rank $k+p$ approximation $\widehat{\mathcal{A}}:= Q Q^* \mathcal{A}$ to $\mathcal{A}$ in factored form.
\State Sample a random quasi-matrix $Y = \begin{bmatrix} y_1, & \cdots &, y_{k+p} \end{bmatrix}$ with $y_j \sim \mathcal{N}_{H_2}(0,\mathcal{A} \mathcal{A}^*)$ i.i.d., as in~\eqref{eq:kl_expansion}.
\State Take an $H_2$-orthonormal basis $Q = \begin{bmatrix} q_1, & \cdots &, q_{k+p}\end{bmatrix}$ for $\range(Y)$. 
\State Apply operator $\mathcal{A}^*$ to $Q$: $\mathcal{A}^* Q = \begin{bmatrix} \mathcal{A}^*q_1, & \cdots &, \mathcal{A}^*q_{k+p} \end{bmatrix}$. 
\State Return $\widehat{\mathcal{A}}:= Q (\mathcal{A}^* Q )^* $ in factored form.
\end{algorithmic}
\end{algorithm}
At this point, this is clearly an \emph{idealized} algorithm, which is not yet practical simply because
sampling according to~\eqref{eq:kl_expansion} appears to require knowledge of the singular vectors and singular values of $\mathcal{A}$.
In~\Cref{section:fdapprox}, we will investigate practical versions of~\Cref{alg:infrsvd} by relating it to the randomized SVD applied to a suitable discretization of $\mathcal{A}$\rev{; we will show that the discretized version approaches the output of \Cref{alg:infrsvd}.} Yet, it is instructive to first establish error estimates for the idealized algorithm.

\subsection{Analysis of~\Cref{alg:infrsvd}}\label{sec:analysis_infrsvd}

We derive stochastic error bounds for~\Cref{alg:infrsvd} by mimicking the proof strategy for the finite-dimensional case presented in~\cite{nakatsukasa2020fast,practicalsketching,Woodruff2014}.  For this purpose, we write the columns of $Y$ as
\begin{equation} \label{eq:defyj}
  y_j = \sum\limits_{i} \omega_{ij} \sigma_{i}  u_i, \quad j = 1,\ldots, k+p,
\end{equation}
and collect the coefficients $\omega_{1j},\ldots, \omega_{kj}$ in the matrix
\begin{equation} \label{eq:BigOmegak}
 \Omega_k = [\omega_{ij}] \in \R^{k \times (k+p)}, \quad i = 1,\ldots,k, \quad j = 1,\ldots, k+p.
\end{equation}
Because of $\omega_{ij} \sim N(0,1)$ (i.i.d.), $\Omega_k$ is a Gaussian random matrix. In particular,
it has full row rank $k$ almost surely, and thus $\Omega_k \Omega_k^\dagger = I_k$, where $\Omega_k^\dagger$ denotes the pseudoinverse of $\Omega_k$. This allows us to state the following structural bound.

\begin{lemma} \label{lemma:structural}
Almost surely (with respect to the Gaussian measure defining $Y$) the approximation $\widehat{\mathcal{A}}:= Q Q^* \mathcal{A}$ returned by~\Cref{alg:infrsvd} satisfies
 \[
  \|\mathcal A - \widehat {\mathcal A}\|_{\HS}^2 \le \sum_{i > k} \sigma_i^2  + \| (\Id - U_k U_k^*) Y \Omega_k^\dagger \|_{\HS}^2,
 \] 
where $U_k = \begin{bmatrix} u_1, & \cdots &, u_k \end{bmatrix}$ is the quasi-matrix containing the first $k$ left singular vectors of $\mathcal A$ and $\Omega_k$ is defined as above. 
\end{lemma}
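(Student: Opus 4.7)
The plan is to mimic the Halko-Martinsson-Tropp structural argument for the finite-dimensional randomized SVD, while being careful about the infinite-dimensional setting. Since $\Omega_k \in \R^{k\times(k+p)}$ is Gaussian with $k < k+p$, it has full row rank almost surely, so $Z := Y\Omega_k^\dagger$ is well defined and $\range(Z)\subseteq\range(Y)$. Writing $P_Z$ for the orthogonal projection onto $\range(Z)$, this inclusion gives $QQ^* P_Z = P_Z$, hence $\Id - QQ^* = (\Id - QQ^*)(\Id - P_Z)$ and
$$\|(\Id - QQ^*)\mathcal{A}\|_{\HS} \le \|(\Id - P_Z)\mathcal{A}\|_{\HS}.$$
It therefore suffices to bound the right-hand side.

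Next I would expand the sketch via~\eqref{eq:defyj} to write $Y = U_k\Sigma_k\Omega_k + U_k^\perp\Sigma_k^\perp\Omega_k^\perp$, where $\Omega_k^\perp$ is the infinite ``matrix'' of the tail coefficients $\omega_{ij}$, $i>k$. A short argument using $\sum_i \sigma_i^2 < \infty$ shows that $\Sigma_k^\perp\Omega_k^\perp$ almost surely defines a bounded operator $\R^{k+p}\to\ell^2$. Right-multiplication by $\Omega_k^\dagger$, combined with $\Omega_k\Omega_k^\dagger = I_k$, then yields
$$Z = U_k\Sigma_k + U_k^\perp F,\qquad F := \Sigma_k^\perp\Omega_k^\perp\Omega_k^\dagger.$$
The core algebraic step is to substitute $U_k\Sigma_k = Z - U_k^\perp F$ into the SVD $\mathcal{A} = U_k\Sigma_k V_k^* + U_k^\perp\Sigma_k^\perp(V_k^\perp)^*$ to obtain $\mathcal{A} = Z V_k^* + U_k^\perp\bigl(\Sigma_k^\perp(V_k^\perp)^* - F V_k^*\bigr)$. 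Applying $\Id - P_Z$ annihilates the first summand, and $\|(\Id - P_Z)U_k^\perp\|_{\op} \le 1$ gives
$$\|(\Id - P_Z)\mathcal{A}\|_{\HS}^2 \le \bigl\|\Sigma_k^\perp(V_k^\perp)^* - F V_k^*\bigr\|_{\HS}^2.$$
Expanding this squared norm and using $(V_k^\perp)^* V_k = 0$ to cancel the cross term leaves $\sum_{i>k}\sigma_i^2 + \|F\|_{\HS}^2$. Finally, $(\Id - U_kU_k^*)Y\Omega_k^\dagger = (\Id - U_kU_k^*)Z = U_k^\perp F$, and the orthonormality of $U_k^\perp$ identifies $\|F\|_{\HS}$ with $\|(\Id - U_kU_k^*)Y\Omega_k^\dagger\|_{\HS}$, matching the claim.

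The main obstacle I anticipate is not algebraic but technical: ensuring that $\Omega_k^\perp$, $\Sigma_k^\perp\Omega_k^\perp$, $F$, $F V_k^*$, and the various compositions above really do define bounded (in fact Hilbert-Schmidt) operators on the appropriate spaces almost surely. In finite dimensions this is automatic; here it requires systematic use of $\mathcal{A}$ being Hilbert-Schmidt, so that $\{\sigma_i\}_i \in \ell^2$, together with the almost-sure full-rank property of the Gaussian block $\Omega_k$.
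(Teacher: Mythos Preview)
Your proposal is correct and follows essentially the same HMT-style structural argument as the paper's proof; the only cosmetic difference is that you introduce the intermediate projector $P_Z$ onto $\range(Y\Omega_k^\dagger)$ and bound $\|(\Id-P_Z)\mathcal A\|_{\HS}$, whereas the paper works directly with $QQ^*$, inserts $Y\Omega_k^\dagger V_k^*$ using $(\Id-QQ^*)Y=0$, and observes that the residual $\mathcal A - Y\Omega_k^\dagger V_k^*$ is already invariant under $\Id - U_kU_k^*$. The use of $\Omega_k\Omega_k^\dagger = I_k$, the orthogonal (Pythagorean) split via $V_k$ versus $V_k^\perp$, and the final identification of $\|(\Id-U_kU_k^*)Y\Omega_k^\dagger\|_{\HS}$ are identical in both arguments.
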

\begin{proof}
Because $QQ^*: H_2 \to H_2$ is the orthogonal projector onto the range of $Y$, we have $(\Id - QQ^*) Y = 0$ and, hence, we may write
\begin{equation}\label{eq:differenterrorterm}
 \mathcal A - \widehat{\mathcal A} = (\Id - QQ^*) \mathcal A = (\Id - QQ^*) ( \mathcal A - Y \Omega_k^\dagger V_k^*),
\end{equation}
where $V_k$ is the quasi-matrix containing the first $k$ right singular vectors.
From~\eqref{eq:defyj}, it follows that \rev{$U_k^* Y \Omega_k^\dagger = \Sigma_k \Omega_k \Omega_k^\dagger = \Sigma_k$} and thus
\begin{align}
\begin{split}\label{eq:projectioninvariant}
 (\Id - U_k U_k^*)( \mathcal A - Y \Omega_k^\dagger V_k^*) &=  \mathcal A - U_k U_k^* A +  U_k U_k^* Y \Omega_k^\dagger V_k^* - Y \Omega_k^\dagger V_k^*\\
 &= \mathcal A - U_k \Sigma_k V_k^* +  U_k \Sigma_k V_k^* - Y \Omega_k^\dagger V_k^* = \mathcal A - Y \Omega_k^\dagger V_k^*.
 \end{split}
\end{align}
 In summary, the error satisfies
\begin{align*}
  \|(\Id - QQ^*) \mathcal A\|_{\HS}^2 & =\| (\Id - QQ^*)(\Id - U_k U_k^*) ( \mathcal A - Y \Omega_k^\dagger V_k^*) \|_{\HS}^2\\
  &\le 
 \| (\Id - U_k U_k^*) ( \mathcal A - Y \Omega_k^\dagger V_k^*) \|_{\HS}^2 \\
 &= \| (\Id - U_k U_k^*) \mathcal A \|_{\HS}^2 + \| (\Id - U_k U_k^*) Y \Omega_k^\dagger V_k^* \|_{\HS}^2 \\
 &= \sum_{i > k}\sigma_i^2  + \| (\Id - U_k U_k^*) Y \Omega_k^\dagger \|_{\HS}^2,
\end{align*}
where the first equality uses \eqref{eq:differenterrorterm}--\eqref{eq:projectioninvariant} and the second equality uses the orthogonality of the co-ranges of the two involved terms.
\end{proof}

\begin{theorem}\label{thm:InfRandSVD}
If $p \geq 2,$ the approximation $\widehat{\mathcal{A}}:= Q Q^* \mathcal{A}$ returned by~\Cref{alg:infrsvd}
satisfies 
\begin{equation} \label{eq:expectedvalues}
 \mathbb E \|\mathcal A - \widehat {\mathcal A}\|_{\HS}^2 \le \left( 1+ \frac{k}{p-1} \right) \sum_{i>k}\sigma_i^2.
\end{equation}
Furthermore, if $k \geq 2$ and $p \geq 4$, then for all $u,t \geq 1$ the inequality
\begin{equation}\label{eq: tail bound}
    \|\mathcal{A} - \widehat{\mathcal{A}}\|_\HS \leq \Big( \sum_{i>k}\sigma_i^2 \Big)^{1/2} + \eta, \quad \text{with}
    \quad \eta = t \sqrt{\frac{3k}{p+1}} \Big( \sum_{i>k}\sigma_i^2 \Big)^{1/2}
%    \sqrt{\sigma_{k+1}^2 + \cdots } + t \sqrt{\frac{3k}{p+1}} \sqrt{\sigma_{k+1}^2 + \cdots} 
+ ut\frac{e\sqrt{k+p}}{p+1} \sigma_{k+1},
\end{equation}
holds with probability at least $1-2t^{-p} - e^{-u^2/2}$.
\end{theorem}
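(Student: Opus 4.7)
The plan is to mirror the classical analysis of the randomized SVD in~\cite[Sec.~10]{rsvd}, starting from the structural bound of Lemma~\ref{lemma:structural} and controlling the remaining stochastic term $\|(\Id - U_k U_k^*)Y\Omega_k^\dagger\|_{\HS}$. Substituting~\eqref{eq:defyj} reveals the key identity
\[
 (\Id - U_k U_k^*)Y = U_k^\perp \Sigma_k^\perp \Omega_k^\perp,
\]
where $\Omega_k^\perp = [\omega_{ij}]_{i>k,\, j=1,\dots,k+p}$ is the (formally infinite) tail of the Gaussian coefficients. Since $\Omega_k$ and $\Omega_k^\perp$ are constructed from disjoint sub-families of the i.i.d.\ Gaussians $\omega_{ij}$, they are independent, and the error reduces to controlling $\|\Sigma_k^\perp \Omega_k^\perp \Omega_k^\dagger\|_{\HS}$.

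\textbf{Expectation bound.} Conditioning on $\Omega_k$ and arguing row by row, each row of $\Omega_k^\perp$ is a standard Gaussian vector in $\R^{k+p}$ independent of $\Omega_k^\dagger$, so
\[
\mathbb{E}\bigl[\|\Sigma_k^\perp \Omega_k^\perp \Omega_k^\dagger\|_{\HS}^2 \mid \Omega_k\bigr] = \Bigl(\sum_{i>k}\sigma_i^2\Bigr)\|\Omega_k^\dagger\|_F^2.
\]
Combined with the classical identity $\mathbb{E}\|\Omega_k^\dagger\|_F^2 = k/(p-1)$ for a $k\times(k+p)$ Gaussian matrix with $p\ge 2$~\cite[Prop.~10.2]{rsvd}, and substituted back into Lemma~\ref{lemma:structural}, this yields~\eqref{eq:expectedvalues}.

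\textbf{Tail bound.} I follow the Gaussian-concentration approach of~\cite[Thm.~10.8]{rsvd}. Conditional on $\Omega_k$, the map $G \mapsto \|\Sigma_k^\perp G \,\Omega_k^\dagger\|_{\HS}$ is Lipschitz in the HS norm with constant $\sigma_{k+1}\|\Omega_k^\dagger\|_{\op}$. Applying Gaussian concentration to this Lipschitz functional of $\Omega_k^\perp$ and bounding the conditional mean via Jensen by $\bigl(\sum_{i>k}\sigma_i^2\bigr)^{1/2}\|\Omega_k^\dagger\|_F$ gives, with conditional failure probability $e^{-u^2/2}$, the bound
\[
\|\Sigma_k^\perp\Omega_k^\perp\Omega_k^\dagger\|_{\HS} \le \Bigl(\sum_{i>k}\sigma_i^2\Bigr)^{1/2}\|\Omega_k^\dagger\|_F + u\sigma_{k+1}\|\Omega_k^\dagger\|_{\op}.
\]
Substituting the standard tail bounds~\cite[Props.~10.3--10.4]{rsvd}, $\mathbb{P}\bigl(\|\Omega_k^\dagger\|_F \ge t\sqrt{3k/(p+1)}\bigr) \le t^{-p}$ and $\mathbb{P}\bigl(\|\Omega_k^\dagger\|_{\op} \ge et\sqrt{k+p}/(p+1)\bigr) \le t^{-(p+1)}\le t^{-p}$, and taking a union bound of the three failure events produces~\eqref{eq: tail bound} with the claimed probability $1 - 2t^{-p} - e^{-u^2/2}$.

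\textbf{Main obstacle.} The one point that genuinely requires more than a routine translation from the finite-dimensional argument is the Gaussian concentration inequality applied to the functional of $\Omega_k^\perp$, which a priori lives in an infinite-dimensional product-Gaussian space. I would resolve this by viewing the random operator $U_k^\perp \Sigma_k^\perp \Omega_k^\perp = (\Id - U_k U_k^*) Y$ intrinsically as a centered Gaussian element of the separable Hilbert space of HS operators from $\R^{k+p}$ to $H_2$, whose covariance has operator norm $\sigma_{k+1}^2$, and invoking Borell's concentration inequality on separable Hilbert spaces. Alternatively, truncating $\Omega_k^\perp$ to its first $N$ tail rows, applying finite-dimensional Gaussian concentration, and passing to the limit $N\to\infty$ using the a.s.\ HS convergence from~\Cref{section:random} gives the same bound.
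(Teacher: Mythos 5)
Your proposal is correct and follows essentially the same route as the paper: the structural bound of \Cref{lemma:structural}, the expectation via independence of $\Omega_k$ and the tail coefficients together with $\mathbb E\|\Omega_k^\dagger\|_\F^2 = k/(p-1)$, and the tail bound by mimicking~\cite[Thm.~10.8]{rsvd}. The only difference is cosmetic: for the concentration step you invoke Borell's inequality directly in the infinite-dimensional Gaussian setting (with the truncation-and-limit argument as a fallback), whereas the paper applies the finite-dimensional result to the truncated variables $X_n$ and passes to the limit by monotone convergence --- which is exactly your stated alternative.
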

\begin{proof}
In light of~\Cref{lemma:structural}, it suffices to study the random variable 
\begin{equation*}
    X := \| (\Id - U_k U_k^*) Y \Omega_k^\dagger \|_{\HS}^2 = \sum\limits_{i >k} \|u_i^*Y \Omega_k^{\dagger}\|_2^2=\sum\limits_{i > k} \sigma_i^2 \|\begin{bmatrix} \omega_{i1}, & \cdots &, \omega_{i,k+p} \end{bmatrix} \Omega_k^{\dagger}\|_2^2.
\end{equation*}
Note that $X$ is almost everywhere the pointwise limit for $n\to \infty$ of the random variables
\begin{equation*}
    X_n:=  \sum\limits_{i=k+1}^{n} \sigma_i^2 \|\begin{bmatrix} \omega_{i1}, & \cdots &, \omega_{i,k+p} \end{bmatrix} \Omega_k^{\dagger}\|_2^2,
\end{equation*}
which are measurable because they only involve finitely many $\omega_{ij}$. Following the proof of~\cite[Theorem 10.5]{rsvd}, it holds that
\[
 \mathbb{E}X_n = \sum\limits_{i=k+1}^{n} \sigma_i^2\, \mathbb{E} \|\begin{bmatrix} \omega_{i1}, & \cdots &, \omega_{i,k+p} \end{bmatrix} \Omega_k^{\dagger}\|_2^2
 = \frac{k}{p-1} \sum\limits_{i=k+1}^{n} \sigma_i^2,
\]
where we used that $\Omega_k$ and $\omega_{i1},\ldots, \omega_{i,k+p}$ are independent for $i \ge k+1$. By monotone convergence, 
this establishes $\mathbb E X = \frac{k}{p-1} \cdot (\sigma_{k+1}^2 + \cdots )$. Together with Lemma~\ref{lemma:structural}, this shows~\eqref{eq:expectedvalues}.

To show~\eqref{eq: tail bound}, let us define
\[
\eta_n \coloneqq t \sqrt{\frac{3k}{p+1}} \Big( \sum_{i=k+1}^n \sigma_i^2 \Big)^{1/2} + ut\frac{e\sqrt{k+p}}{p+1} \sigma_{k+1}
\]
and note that $\eta = \lim_{n \to \infty} \eta_n$. In light of~\Cref{lemma:structural}, we need to estimate the probability that $X > \eta^2$. Note that $\eta_n$ converges from below to $\eta$. Following the arguments sketched in the proof of~\cite[Theorem 10.8]{rsvd}, %{\color{red} The proof for Frobenius norm is not detailed in \cite{rsvd}},
we have that
\[
\mathbb P(X_n > \eta^2) \le \mathbb P(X_n > \eta_n^2) \le 2t^{-p} + e^{-u^2/2}.
\]
Since the random variables $X_n$ are nonnegative, monotonically increasing %and converge almost surely to $X$
, applying the monotone convergence theorem to the random variables $\widehat{X}:=\chi_{\{X > \eta^2\}}$ and $\widehat{X}_n:=\chi_{\{X_n > \eta^2\}}$, where $\chi$ denotes the characteristic function, implies
\[
\mathbb P(X > \eta^2) =\mathbb{E}\widehat{X} =  \lim_{n\to \infty} \mathbb{E}\widehat{X}_n=\lim_{n\to \infty} \mathbb P(X_n > \eta^2) \le 2t^{-p} + e^{-u^2/2}.
\]
By Lemma~\ref{lemma:structural}, this shows that 
$\|\mathcal{A} - \widehat{\mathcal{A}}\|_\HS^2 \le \sum\limits_{i > k} \sigma_i^2 + \eta^2$ 
holds with probability at least $1-2t^{-p} - e^{-u^2/2}$. This implies the more common formulation~\eqref{eq: tail bound}.
\end{proof}

The results of \Cref{thm:InfRandSVD} match the corresponding results in the finite-dimensional case; Theorems 10.5 and 10.7 in~\cite{rsvd}. \rev{To compare with the seminal results by Boullé and Townsend, let us recall that the method from~\cite{boulle2022learning} samples a quasi-matrix $\Omega$ with $k+p$ columns, each independently distributed as $\mathcal{N}_{H_1}(0,\mathcal{K})$
for a given self-adjoint positive covariance operator $\mathcal{K}:H_1 \to H_1$. It then computes an orthonormal basis $Q$ for $\range(\mathcal{A} \Omega)$ and return $QQ^*\mathcal{A}$ as an approximation to $\mathcal{A}$. Error bounds were established in \cite{boulle2022learning,boulleiclr} , and later improved in \cite{perssonboullekressner2025}. In particular, Theorem 3.3 and Remark 2.4 in~\cite{perssonboullekressner2025} imply the estimate
\begin{equation} \label{eq:bt}
    \mathbb{E}\|\mathcal{A} - QQ^* \mathcal{A} \|_{\HS}^2 \leq \left(1 + \frac{k}{p-1} \beta_k + \delta_k\right)\sum\limits_{i> k} \sigma_i^2,
\end{equation}
where the constants $\beta_k$ and $\delta_k$ measure the interaction between the covariance operator $\mathcal{K}$ and the operator $\mathcal{A}$. In particular, as explained in~\cite[Section 2.2]{perssonboullekressner2025}, $\beta_k$ is a measure of the randomness of $\Omega$ in the directions of $V_k^{\bot}$, and $\delta_k$ is a measure of how close $\range(V_k)$ is to an invariant subspace of~$\mathcal{K}$. Ideally, one should choose $\mathcal{K}$ to make $\beta_k$ and $\delta_k$ small. However, without prior information on the operator $\mathcal{A}$, choosing the right covariance operator $\mathcal{K}$ may be difficult. Furthermore, imposing too much regularity on $\mathcal{K}$ (i.e., fast eigenvalue decay) may result in large $\beta_k$ and $\delta_k$. In particular, our numerical experiments will demonstrate how a poorly chosen covariance operator $\mathcal{K}$ may result in a stagnation of the approximation quality. In contrast, the result of~\Cref{thm:InfRandSVD} matches~\eqref{eq:bt} for  the (favorable) constants $\beta_k = 1$ and $\delta_k = 0$, without relying on any prior information.

}

\section{Finite-dimensional approximation}\label{section:fdapprox}

We now aim at relating the infinite-dimensional randomized SVD of the Hilbert--Schmidt operator $\mathcal A : H_1 \to H_2$
to a discretization $A_{m,n}$ of $\mathcal A$. 
For this purpose, we consider a discretization effected by finite-dimensional subspaces
\[
\spann\{w_1,\dots,w_n\} \subset H_1, \qquad \spann\{z_1,\dots,z_m\} \subset H_2,\]
with orthonormal bases $w_1,\dots,w_n$ and $z_1,\dots,z_m$. The compression of 
$\mathcal A$ to these subspaces has the matrix representation 
\begin{equation*}\label{eq: Amn}
A_{m,n} = Z_m^* \mathcal A W_n  = \big[ \langle z_i, \mathcal A w_j \rangle_{H_2} \big]_{i,j = 1}^{m,n} \in \mathbb R^{m \times n},
\end{equation*}
where $Z_m = \begin{bmatrix} z_1, & \cdots &, z_m \end{bmatrix}$ and $W_n = \begin{bmatrix} w_1, & \cdots &, w_n \end{bmatrix}$ are quasi-matrices with orthonormal columns. The compression itself is the linear operator
$\mathcal A_{m,n}: H_1\to H_2$ given by
\begin{equation*} \label{eq:linopamn}
\mathcal A_{m,n} \coloneqq Z_m A_{m,n} W_n^* = Z_m Z_m^* \mathcal A W_n W_n^*.
\end{equation*}

The following simple lemma quantifies the impact on the total error when replacing the discretization $A_{m,n}$ by an approximation $\widehat{A}_{m,n}$, such as the randomized SVD.

\begin{lemma}\label{lemma:discrete}
    For $\widehat{A}_{m,n} \in \mathbb R^{m \times n}$, let $\widehat{\mathcal{A}}_{m,n} \coloneqq Z_m \widehat{A}_{m,n} W_n^* : H_1 \to H_2$. Then 
    \begin{equation*}
        \|\mathcal{A} - \widehat{\mathcal{A}}_{m,n}\|_{\HS}^2 = \|\mathcal{A} - \mathcal A_{m,n}\|_{\HS}^2 + \|A_{m,n} - \widehat{A}_{m,n}\|_\F^2. 
    \end{equation*}
\end{lemma}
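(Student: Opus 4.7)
The plan is to prove this by a Pythagorean-type decomposition of the error $\mathcal{A} - \widehat{\mathcal{A}}_{m,n}$ into two pieces that are orthogonal with respect to the Hilbert--Schmidt inner product, followed by an isometry argument to identify one of the pieces with the Frobenius error on the matrix level.

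First I would write
\[
\mathcal{A} - \widehat{\mathcal{A}}_{m,n} = \bigl(\mathcal{A} - \mathcal{A}_{m,n}\bigr) + \bigl(\mathcal{A}_{m,n} - \widehat{\mathcal{A}}_{m,n}\bigr),
\]
so that
\[
\|\mathcal{A} - \widehat{\mathcal{A}}_{m,n}\|_{\HS}^2 = \|\mathcal{A} - \mathcal{A}_{m,n}\|_{\HS}^2 + \|\mathcal{A}_{m,n} - \widehat{\mathcal{A}}_{m,n}\|_{\HS}^2 + 2\langle \mathcal{A} - \mathcal{A}_{m,n},\, \mathcal{A}_{m,n} - \widehat{\mathcal{A}}_{m,n}\rangle_{\HS},
\]
and I then need to show that the cross term vanishes and that the second squared-norm term equals the Frobenius norm of the matrix difference.

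For the cross term, set $P \coloneqq Z_m Z_m^*$ and $Q \coloneqq W_n W_n^*$, which are orthogonal projections in $H_2$ and $H_1$, respectively, so that $\mathcal{A}_{m,n} = P\mathcal{A}Q$ and $\widehat{\mathcal{A}}_{m,n} = P\widehat{\mathcal{A}}_{m,n}Q$. The key observation is that $\mathcal{A}_{m,n} - \widehat{\mathcal{A}}_{m,n}$ is left-invariant under $P$ and right-invariant under $Q$. Using the cyclicity of the trace in the definition of the Hilbert--Schmidt inner product, one gets
\[
\langle \mathcal{A} - \mathcal{A}_{m,n},\, \mathcal{A}_{m,n} - \widehat{\mathcal{A}}_{m,n}\rangle_{\HS} = \langle P(\mathcal{A} - \mathcal{A}_{m,n})Q,\, \mathcal{A}_{m,n} - \widehat{\mathcal{A}}_{m,n}\rangle_{\HS},
\]
and since $P\mathcal{A}Q = \mathcal{A}_{m,n}$ and $P\mathcal{A}_{m,n}Q = \mathcal{A}_{m,n}$, the projected first factor is zero.

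It remains to identify $\|\mathcal{A}_{m,n} - \widehat{\mathcal{A}}_{m,n}\|_{\HS}$ with $\|A_{m,n} - \widehat{A}_{m,n}\|_{\F}$. Writing $E \coloneqq A_{m,n} - \widehat{A}_{m,n}$, we have $\mathcal{A}_{m,n} - \widehat{\mathcal{A}}_{m,n} = Z_m E W_n^*$. Since $Z_m$ and $W_n$ have $H_2$- and $H_1$-orthonormal columns, respectively, they are partial isometries, and the Hilbert--Schmidt norm is invariant under pre- and post-composition by such partial isometries; this gives $\|Z_m E W_n^*\|_{\HS} = \|E\|_{\F}$, completing the identity. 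I do not expect any real obstacle here — the only subtlety worth mentioning explicitly is justifying the cyclicity/adjoint manipulation in the Hilbert--Schmidt inner product in the quasi-matrix setting, but this is standard once one recognizes $P$ and $Q$ as bounded self-adjoint projections.
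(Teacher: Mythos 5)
Your proof is correct and takes essentially the same route as the paper: the same splitting $\mathcal{A} - \widehat{\mathcal{A}}_{m,n} = (\mathcal{A} - \mathcal{A}_{m,n}) + (\mathcal{A}_{m,n} - \widehat{\mathcal{A}}_{m,n})$ followed by a Pythagorean argument and the identification $\|Z_m (A_{m,n}-\widehat{A}_{m,n}) W_n^*\|_{\HS} = \|A_{m,n}-\widehat{A}_{m,n}\|_{\F}$. The only difference is presentational: you verify the orthogonality basis-free in the Hilbert--Schmidt inner product (via $Z_mZ_m^*(\mathcal{A}-\mathcal{A}_{m,n})W_nW_n^* = 0$ and trace cyclicity), while the paper checks it columnwise in $H_2$ using an orthonormal basis of $H_1$ extending $w_1,\dots,w_n$, which also yields the Frobenius norm directly.
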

\begin{proof}
    Let $\{ e_i \}_i$ be an orthonormal basis of $H_1$ such that $e_1 = w_1,\ldots,e_n = w_n$. Then for $i=1,\dots,n$ the vectors $(\mathcal{A} - \mathcal{A}_{m,n})e_i = (\Id - Z_m Z_m^* )\mathcal{A} e_i$ and $({\mathcal A}_{m,n} - \widehat{\mathcal{A}}_{m,n}) e_i = Z_m (A_{m,n} - \widehat{A}_{m,n}) W_n^* e_i$ are orthogonal to each other. At the same time $({\mathcal A}_{m,n} - \widehat{\mathcal{A}}_{m,n})e_i = 0$ for all $i > n$. From the definition of the Hilbert--Schmidt norm we hence obtain
    \begin{align*}
    %\|\mathcal{A} - \widehat{\mathcal{A}}_{m,n}\|_{\HS}^2 = 
    \|\mathcal{A} - \mathcal{A}_{m,n} + \mathcal{A}_{m,n} - \widehat{\mathcal{A}}_{m,n}\|_{\HS}^2 &= \|\mathcal{A} - \mathcal{A}_{m,n} \|_\HS^2 + \sum_{i=1}^n \| Z_m (A_{m,n} - \widehat{A}_{m,n}) W_n^* e_i \|_{H_2}^2 \\
    &= \|\mathcal{A} - \mathcal{A}_{m,n} \|_\HS^2 + \sum_{i=1}^n \| (A_{m,n} - \widehat{A}_{m,n}) W_n^* e_i \|^2.
    \end{align*}
    The last sum equals the Frobenius norm of $A_{m,n} - \widehat{A}_{m,n}$, since $W_n^* e_1,\dots,W_n^* e_n$ are the canonical basis vectors in $\mathbb{R}^n$.
\end{proof}

\rev{The quantity $\| \mathcal{A} - \widehat{\mathcal{A}}_{m,n}\|_{\HS}$ is the discretization error and influences all subsequent estimates in this section. Making a suitable choice of discretization subspaces, which ensures a sufficiently fast decay of this error when $m,n \to \infty$, can be highly problem-dependent and is not subject of our considerations. In \emph{theory}, an optimal choice is given by the subspaces spanned by the $m$ resp.~$n$ dominant left resp.~right singular vectors yielding the error $\left( \sum_{\ell > \min(m,n)} \sigma_\ell^2 \right)^{1/2}$, but these subspaces are of course usually not available. In fact, for integral operators with $L^2$ kernels $\kappa(x,y)$ such as~\eqref{eq:integraloperator}, estimating the decay of the $L^2$ discretization errors of $\kappa$ in suitable function systems based on smoothness of $\kappa$ is a classical approach for estimating the actual decay of the singular values themselves. Among the wide range of available results let us only mention that for analytic kernels $\kappa$ on bounded domains one can expect exponential decay of the discretization error when using polynomial approximation~\cite{Little1984,Townsend2015,Trefethen2013}, whereas for kernels with Sobolev regularity one can achieve algebraic decay depending on smoothness exponents when using piecewise polynomial or finite element approximation;~see,~e.g.,~\cite{BirmanSolomjak1977,Schwab2006,Griebel2019}. Additional references and some historical notes regarding the important case of radial basis functions can be found in~\cite{Wathen2015}.
}

\subsection{Approximation error of randomized SVD} \label{sec:approxerror}

Suppose that the randomized SVD is applied to obtain a low-rank approximation $\widehat A_{m,n}$ of the discretization $A_{m,n}$, that is,
\begin{equation*}%\label{eq: discrete randSVD}
\widehat A_{m,n} = Q_m Q_m^* A_{m,n},
\end{equation*}
where $Q_m$ is an orthonormal basis of $A_{m,n} \Omega'_n$ with a Gaussian random matrix $\Omega'_n \in \mathbb{R}^{n \times (k+p)}$. Here we use the notation $\Omega_n'$ in order to distinguish the random matrices for obtaining $\widehat A_{m,n}$ from the ones in~\eqref{eq:BigOmegak} involved in the infinite-dimensional SVD. The following theorem decouples the discretization error from the low-rank approximation error. In particular, the error bounds converge to the ones established in~\Cref{thm:InfRandSVD} for the infinite-dimensional randomized SVD  as the discretization converges to the operator in the Hilbert-Schmidt norm.

\begin{theorem} \label{thm:main}
    Consider a Hilbert-Schmidt operator $\mathcal{A}:H_1 \to H_2$. Given quasi-matrices $W_n: \R^n \to H_1$,  $Z_m: \R^m \to H_2$ with orthonormal columns, let $\mathcal A_{m,n} = Z_m A_{m,n} W_n^*$ with the matrix $A_{m,n} = Z_m^* \mathcal{A} W_n \in \mathbb{R}^{m \times n}$. For a Gaussian random matrix $\Omega_n' \in \mathbb{R}^{n \times (k+p)}$ let $Q_m$ be an orthonormal basis for $\range(A_{m,n} \Omega_n')$, defining the approximation 
    \begin{equation} \label{eq:hatAmn}    
     \widehat{\mathcal{A}}_{m,n} = Z_m Q_m Q_m^* A_{m,n} W_n^*.
    \end{equation}
    Then, if $k \geq 2$, we have
\begin{equation*} %\label{eq:fdexpectedvalues}
 \mathbb E \|\mathcal{A} - \widehat{\mathcal{A}}_{m,n} \|_{\HS}^2 \le \|\mathcal{A} - \mathcal A_{m,n} \|_{\HS}^2 + \left( 1+ \frac{k}{p-1} \right) \sum_{i>k}\sigma_i^2,
\end{equation*}
where $\sigma_1 \ge \sigma_2 \ge \cdots$ are the singular values of $\mathcal A$.
Furthermore, if $k \geq 2$ and $p \geq 4$, then for all $u,t \geq 1$ the inequality
\begin{equation*}
 \|\mathcal{A} - \widehat{\mathcal{A}}_{m,n} \|_{\HS} \le \|\mathcal{A} - \mathcal A_{m,n} \|_{\HS}
 + \Big( \sum_{i>k}\sigma_i^2 \Big)^{1/2} + \eta
\end{equation*}
holds with probability at least $1-2t^{-p} - e^{-u^2/2}$ for $\eta$ defined as in~\eqref{eq: tail bound}.
\end{theorem}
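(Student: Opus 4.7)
The plan is to combine the error decomposition of Lemma~\ref{lemma:discrete} with the standard (finite-dimensional) randomized SVD bounds, using the fact that singular values cannot increase under orthonormal compression. Since $\widehat A_{m,n} = Q_m Q_m^* A_{m,n}$ is exactly the output of the usual randomized SVD applied to the matrix $A_{m,n}$ with Gaussian test matrix $\Omega_n'$ and oversampling $p$, the heavy lifting has already been done in~\cite[Sec.~10]{rsvd}.

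First, applying Lemma~\ref{lemma:discrete} with $\widehat A_{m,n} = Q_m Q_m^* A_{m,n}$ yields
\[
\|\mathcal{A} - \widehat{\mathcal{A}}_{m,n}\|_{\HS}^2 = \|\mathcal{A} - \mathcal A_{m,n}\|_{\HS}^2 + \|A_{m,n} - \widehat A_{m,n}\|_{\F}^2,
\]
so it suffices to control the second summand. For this I would invoke~\cite[Theorem~10.5]{rsvd} for expectation and the tail bound derived in the proof of~\cite[Theorem~10.8]{rsvd} (the same one used to prove Theorem~\ref{thm:InfRandSVD} above), both expressed in the tail singular values $\sigma_i(A_{m,n})$ of $A_{m,n}$.

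Next, I would transfer these bounds from the singular values of $A_{m,n}$ to those of $\mathcal A$. Since $A_{m,n} = Z_m^* \mathcal A W_n$ with $Z_m, W_n$ having orthonormal columns, pre- and post-composition with $Z_m^*$ and $W_n$ are non-expansive. Hence the min--max (Courant--Fischer) characterization gives $\sigma_i(A_{m,n}) \le \sigma_i$ for every $i \ge 1$, and in particular
\[
\sum_{i > k} \sigma_i(A_{m,n})^2 \le \sum_{i > k} \sigma_i^2, \qquad \sigma_{k+1}(A_{m,n}) \le \sigma_{k+1}.
\]
Substituting into the finite-dimensional randomized SVD bounds gives the expectation statement directly by linearity, and the $\eta$ in the tail statement can then be written in terms of the singular values of $\mathcal A$ as in~\eqref{eq: tail bound}.

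For the tail statement I would first take square roots in the decomposition using $\sqrt{a^2+b^2} \le a+b$ for non-negative $a,b$, which turns the Pythagorean identity into
\[
\|\mathcal{A} - \widehat{\mathcal{A}}_{m,n}\|_{\HS} \le \|\mathcal{A} - \mathcal A_{m,n}\|_{\HS} + \|A_{m,n} - \widehat A_{m,n}\|_{\F},
\]
and then apply the finite-dimensional tail bound on the good event of probability at least $1 - 2t^{-p} - e^{-u^2/2}$. I do not expect a real obstacle here: the only mildly delicate point is the consistent use of the singular value comparison $\sigma_i(A_{m,n}) \le \sigma_i$, which ensures that the final bound features the singular values of $\mathcal A$ and, consequently, that one recovers the infinite-dimensional rates of Theorem~\ref{thm:InfRandSVD} in the limit where the discretization error vanishes.
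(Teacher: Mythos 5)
Your proposal is correct and follows essentially the same route as the paper: apply Lemma~\ref{lemma:discrete} to split off the discretization error, invoke the finite-dimensional randomized SVD bounds for $\|A_{m,n} - Q_m Q_m^* A_{m,n}\|_{\F}$, and conclude via the singular value comparison $\sigma_i(A_{m,n}) \le \sigma_i$ for the compression. The added detail on taking square roots via $\sqrt{a^2+b^2}\le a+b$ for the tail statement is exactly the step the paper leaves implicit.
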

\begin{proof}
    By \Cref{lemma:discrete}, 
    $
        \|\mathcal{A} - \widehat{\mathcal{A}}_{m,n} \|_{\HS}^2 =
        \|\mathcal{A} - \mathcal A_{m,n}\|_{\HS}^2 + \|(I-Q_m Q_m^*) A_{m,n}\|_\F^2.
    $
    As the second term is the error of the randomized SVD applied to $A_{m,n}$, the usual error bounds apply (that is, the finite-dimensional analog of \Cref{thm:InfRandSVD}), which imply the statements of the theorem but with the singular values of $\mathcal A$ replaced by the singular values 
    $\hat \sigma_1 \ge \hat \sigma_2 \ge \cdots$ of $A_{m,n}$. The 
    proof is completed by noting that $\hat \sigma_i \le \sigma_i$ because $A_{m,n}$ is a compression of $\mathcal A$ \rev{\cite[Theorem 1.6]{simon}}.
\end{proof}

\begin{remark} \label{remark:accessmodel}
    In some settings, one might only have implicit access to the operator $\mathcal{A}$ through the action on vectors $f\mapsto \mathcal{A}f$ and $g \mapsto \mathcal{A}^*g$. In these cases, it may be undesirable to explicitly form the matrix $A_{m,n}$, which would require $\min\{m,n\}$ applications of $\mathcal{A}$. Instead, one can access $A_{m,n}$ \emph{implicitly} through operator application. For example, applying $\mathcal{A}$ to $W_n \Omega_n'$ and then projecting gives
    \[
    Z_m^*\left[\mathcal{A}(W_n \Omega_n')\right] = A_{m,n}\Omega_n',
    \]
    which requires $k+p$ products with $\mathcal{A}$. If $Q_m$ is an orthonormal basis for $\range(A_{m,n}\Omega_n')$, then $Z_mQ_m$ is an orthonormal basis for $\range(\mathcal{A}_{m,n} W_n \Omega_n')$. We can then form the approximation (in factored form)
    \[
    \widehat{\mathcal{A}}_{m,n} = Z_m Q_m \left[(Q_m^* Z_m^*)\mathcal{A}\right]{W}_n {W}_n^*,
    \]
    which requires $k+p$ additional products with $\mathcal{A}^*$. 
\end{remark}
\begin{remark}
The approximation considered in \Cref{thm:main} differs from that in \cite{boulle2022learning}. There, a low-rank approximation of the operator $\mathcal{A}$ is constructed by first sampling a random quasimatrix $\Psi$ with $k+p$ columns drawn i.i.d.~from a Gaussian measure in $H_1$ with a suitable covariance operator $\mathcal{K}$ in order to then obtain an orthonormal basis $\widehat{Q}$ for  $\range(\mathcal{A}\Psi)$ and return the approximation $\widehat{Q} \widehat{Q}^*\mathcal{A}$. For a specific choice of $\mathcal K$ and basis $Z_m$ our result can be phrased in a similar form. Let $\Psi = W_n \Omega_n'$, then $\Psi$ is a quasi-matrix whose columns are i.i.d.~samples from a Gaussian measure with the  covariance operator $\mathcal{K} = W_nW_n^*$. If $Z_m$ is chosen as an orthonormal basis for $\range(\mathcal{A}W_n)$ we have $\mathcal{A}W_n \Omega_n' = Z_m A_{m,n} \Omega_n'$. Therefore, for this choice of $Z_m$, if $Q_m$ is an orthonormal basis for $\range(A_{m,n}\Omega_n')$, then $\widehat Q = Z_m Q_m$ is indeed an orthonormal basis for $\range(\mathcal{A}W_n \Omega_n') = \range(\mathcal A \Psi)$. Similar to \Cref{lemma:discrete}
we have the following inequality,
\begin{equation}\label{eq:boulletownsend_inequality}
    \|\mathcal{A} - \widehat Q \widehat Q^* \mathcal{A}\|_{\HS} \leq \|\mathcal{A} - \widehat{ \mathcal A}_{m,n}\|_{\HS},
\end{equation}
which relates the approximation error of $\widehat Q\widehat Q^* \mathcal{A}$ to the approximation error of $\widehat{A}_{m,n}$ as obtained in~\eqref{eq:hatAmn}. Hence, all estimates from \Cref{thm:main} also apply to $\|\mathcal{A} - \widehat Q \widehat Q^* \mathcal{A}\|_{\HS}$. 
\end{remark}

\subsection{An adaptive scheme for truncation} \label{sec:adaptive}

When the finite dimensional bases $Z_m$ and $W_n$ are obtained from truncation of complete orthonormal systems $\{w_j\}_{j}$ and $\{z_i\}_{i}$ in the Hilbert spaces $H_1$ and $H_2$, respectively, one needs to choose $m$ and $n$ such that the discretization error will be sufficiently small. This means that $Z_m$ and $W_n$ should resolve the range and co-range of $\mathcal{A}$, respectively. In this section, we develop an adaptive algorithm for this purpose. In particular, given a specified tolerance $\epsilon$ (e.g., set to machine precision), we seek $m$ and~$n$ sufficiently large so that the discretization error satisfies
\begin{equation}\label{eq:discerror}
    \|\mathcal{A} - Z_m A_{m,n} W_n^* \|_{\HS} \lesssim \epsilon \|\mathcal{A}\|_{\HS},
\end{equation}
where $A_{m,n} \in \mathbb{R}^{m \times n}$ is as in~\Cref{lemma:discrete}. 
Following~\Cref{remark:accessmodel}, we assume that the operator is only accessed through
actions of $\mathcal{A}$ and $\mathcal{A}^*$ to elements from the finite-dimensional subspaces defining the discretization.

We will use an inner-outer scheme for choosing $m,n$. In the outer scheme, we 
use a common doubling strategy in adaptive numerical algorithms and set $n_\ell = 2^\ell$ for increasing $\ell = 1,2,\ldots$ until a certain stopping criterion is met. In the inner scheme, we consider $n_\ell$ fixed and choose $m_\ell$ appropriately.

\subsubsection*{Choosing $m_\ell$}

For fixed $n_\ell$, we aim at determining $m_\ell$ so that the projection onto $Z_{m_\ell}$ sufficiently resolves the range of $\mathcal{A}W_{n_\ell}$:
\begin{equation}\label{eq:error}
    \|(\Id - Z_{m_\ell}Z_{m_\ell}^*)\mathcal{A}W_{n_\ell} \|_{\HS} \lesssim \epsilon\|\mathcal{A}\|_{\HS}.
\end{equation}
Let $\Omega_{n_\ell} \in \mathbb{R}^{n_\ell \times (k+p)}$ be a Gaussian random matrix.\footnote{To reduce random fluctuations in our heuristics, the
random matrices are nested when $\ell$ is increased, that is, $\Omega_{n_{\ell+1}}$ contains $\Omega_{n_{\ell}}$ in its first $n_\ell$ rows and the newly added $n_\ell$ rows contain independent~$\mathcal{N}(0,1)$ random variables.}
Because of
    \begin{equation}\label{eq:expectation}
    \frac{1}{k + p} \mathbb{E}\|(\Id - Z_{m_\ell}Z_{m_\ell}^*) \mathcal{A}W_{n_\ell}\Omega_{n_\ell} \|_{\HS}^{2} = \|(\Id - Z_{m_\ell}Z_{m_\ell}^*)\mathcal{A}W_{n_\ell}\|_{\HS}^2,
\end{equation}
we can view $\mathcal{A}W_{n_\ell} \Omega_{n_\ell}$ as a proxy for (the more expensive) $\mathcal{A}W_{n_\ell}$. Inserting this substituion into the criterion~\eqref{eq:error} then requires us to verify whether the tail of the expansion in the basis $\{z_i\}_{i}$ is negligible for every $\mathcal{A}W_{n_\ell} (\Omega_{n_\ell})_j \in H_2$, where $(\Omega_{n_{\ell}})_j$ denotes the $j$th column of $\Omega_{n_{\ell}}$. Because checking these (infinite) tails is computationally infeasible, we verify instead whether the expansion coefficients $z_i^* \mathcal{A}W_{n_\ell} (\Omega_{n_\ell})_j$
for $i = 1,\ldots,m_\ell$ contain entries that are negligible relative to the largest coefficient. In summary, we declare 
$m_\ell$ to be suitably chosen when
\begin{equation} \label{eq:critml}
 \min\limits_{i = 1,\ldots,m_\ell} \|z_i^* \mathcal{A}W_{n_\ell} (\Omega_{n_\ell})_j\|_{\infty}  \leq \epsilon  \max\limits_{i = 1,\ldots,m_\ell} \|z_i^* \mathcal{A}W_{n_\ell} (\Omega_{n_\ell})_j\|_{\infty}  \quad \text{for every } j = 1,\ldots,k+p,
\end{equation}
where $\|\cdot\|_{\infty}$ denotes the $\infty$-norm of a vector. If this condition is not satisfied, 
we double $m_\ell$ and repeat the procedure. 

The heuristic criterion~\eqref{eq:critml} is similar in spirit to the one used by Chebfun for truncating expansions; see \cite[Sec.~2]{battlestrefethen}.  In our numerical experiments $\mathcal{A}$ is an operator $L^2([-1,1]^d) \to L^2([-1,1]^d)$, for $d = 1$ or $2$, and, in fact, we delegate the choice of $m_\ell$ to Chebfun. Although Chebfun uses Chebyshev interpolation/expansion rather than $L^2$-orthogonal projection, Chebfun ensures that the approximations are accurate up to machine precision. Thus, for practical purposes, Chebfun's approximation can be regarded as numerically equivalent to the projection-based procedure explained above.

\subsubsection*{Choosing $\ell$} 

It remains to discuss the choice of the refinement level $\ell$ such that the approximation error bound~\eqref{eq:discerror} is satisfied when setting $n \equiv n_\ell = 2^\ell$ and $m \equiv m_\ell$ (chosen as explained above).
As a simple heuristic criterion, we stop increasing $\ell$ when the approximation error is not sufficiently reduced anymore:
\begin{equation*}
    \|\mathcal{A}- Z_{m_\ell}Z_{m_\ell}^* \mathcal{A} W_{n_\ell}W_{n_\ell}^*\|_{\HS}^2 - \|\mathcal{A}- Z_{m_{\ell-1}}Z_{m_{\ell-1}}^* \mathcal{A} W_{n_{\ell-1}}W_{n_{\ell-1}}^*\|_{\HS}^2 \lesssim \epsilon^2 \|\mathcal{A}\|_{\HS}^2.
\end{equation*}
Using Pythagoras theorem and the orthonormality of the systems $\{z_i\}_{i}$ and $\{w_j\}_{j}$ we obtain
\begin{align*}
    &\|\mathcal{A}- Z_{m_\ell}Z_{m_\ell}^* \mathcal{A} W_{n_\ell}W_{n_\ell}^*\|_{\HS}^2 - \|\mathcal{A}- Z_{m_{\ell-1}}Z_{m_{\ell-1}}^* \mathcal{A} W_{n_{\ell-1}}W_{n_{\ell-1}}^*\|_{\HS}^2 \\
    {}={}&\|Z_{m_\ell}Z_{m_\ell}^*\mathcal{A} W_{n_\ell}W_{n_\ell}^* - Z_{m_{\ell-1}}Z_{m_{\ell-1}}^*\mathcal{A} W_{n_{\ell-1}}W_{n_{\ell-1}}^*\|_{\HS}^2.
\end{align*}
Recalling~\eqref{eq:expectation} and using $\|Z_{m_\ell}Z_{m_\ell}^*\mathcal{A} W_{n_\ell}W_{n_\ell}^*\|_{\HS}$ as a proxy for $\|\mathcal{A}\|_{\HS}$ we stop the scheme when
    \begin{equation*}
    \|Z_{m_\ell}Z_{m_\ell}^*\mathcal{A} W_{n_\ell}\Omega_{n_\ell} - Z_{m_{\ell-1}}Z_{m_{\ell-1}}^*\mathcal{A} W_{m_{\ell-1}}\Omega_{n_{\ell-1}}\|_{\HS} \leq \epsilon \|Z_{m_\ell}Z_{m_\ell}^*\mathcal{A} W_{m_\ell}\Omega_{n_\ell}\|_{\HS}.
\end{equation*}
We then set $m = m_\ell$, $n = n_\ell$, and reuse the sketch $Z_{m}Z_{m}^*\mathcal{A} W_{n}\Omega_{n}$ when carrying out the randomized  SVD. 

\subsection{Convergence of randomized SVD} \label{sec:convrandSVD}

\Cref{thm:main} already indicates that the idealized randomized SVD of the original operator $\mathcal{A}$ in \Cref{alg:infrsvd} and the randomized SVD of its discretization $A_{m,n}$, as obtained in \eqref{eq:hatAmn}, are closely related. The purpose of this section is to explore this relation in more detail and study the convergence of the
randomized SVD \emph{itself} as $m,n\to \infty$. The discrete approximation $\widehat{\mathcal{A}}_{m,n}$ in~\eqref{eq:hatAmn} involves a random matrix of the form $A_{m,n}\Omega'_n = Z_m^* \mathcal{A} W_n \Omega'_n$ for a Gaussian random matrix $\Omega'_n \in \R^{n \times (k+p)}$. The columns of the quasi-matrix
\begin{equation}\label{eq:Yn}
Y_n = \mathcal{A} W_n \Omega'_n
\end{equation}
are independent and distributed according to the Gaussian measure
\[
 \mu_n \coloneqq \mathcal{N}_{H_2}(0,\mathcal{A}W_n W_n^* \mathcal{A}^*)
\]
on $H_2$. On the other hand, the columns of the quasi-matrix $Y$ involved in the idealized infinite-dimensional randomized SVD of $\mathcal A$ (\Cref{alg:infrsvd})
are independent and distributed according to 
\[
 \mu \coloneqq \mathcal{N}_{H_2}(0,\mathcal{A}\mathcal{A}^*).
\]
In order to obtain reasonable convergence statements, the measures $\mu$ and $\mu_n$ need to be coupled. More specifically, we will consider the set $\Gamma(\mu_n,\mu)$ of all joint probability Borel measures on $H_2 \times H_2$ with 
marginals $\mu_n$ and $\mu$. For the columns of $Y$ and $Y_n$, choosing the optimal coupling leads to the notion of 
the Wasserstein-$2$ distance between $\mu_n$ and $\mu$ defined as
\begin{equation} \label{eq:defwasserstein}
 W_2(\mu_n, \mu):= \Big( \inf_{\gamma \in \Gamma(\mu_n,\mu)} \mathbb E_{(y_n,y) \sim \gamma}  \| y - y_n \|^2_{\HS} \Big)^{1/2}.
\end{equation}
The following lemma shows that $\mu_n$ approaches $\mu$ as the discretization of $\mathcal{A}$ converges.
\begin{lemma} \label{lemma:wassersteinnew}
The Wasserstein-$2$ distance~\eqref{eq:defwasserstein}
satisfies
$W_2(\mu_n, \mu) \leq \|\mathcal{A} - \mathcal{A}W_nW_n^*\|_{\HS}.$
\end{lemma}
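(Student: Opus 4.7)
The plan is to bound $W_2(\mu_n, \mu)$ from above by exhibiting an explicit coupling $\gamma \in \Gamma(\mu_n, \mu)$ whose transport cost equals $\|\mathcal{A} - \mathcal{A}W_nW_n^*\|_{\HS}^2$; taking the infimum in~\eqref{eq:defwasserstein} will then immediately yield the claim. The key idea is to drive both $y_n \sim \mu_n$ and $y \sim \mu$ with a \emph{single} infinite i.i.d.~Gaussian sequence $\omega=(\omega_1,\omega_2,\dots)$ on the product probability space $(\mathbb R^{\mathbb N},\mathcal B(\mathbb R^{\mathbb N}),\mathbb P)$ introduced in Section~\ref{section:random}.

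Concretely, I would extend the orthonormal system $w_1,\dots,w_n$ in $H_1$ to a complete orthonormal basis $\{w_i\}_{i\ge 1}$ and, on the same probability space, set
\[
y_n \;:=\; \sum_{i=1}^n \omega_i\,\mathcal{A}w_i, \qquad y \;:=\; \sum_{i=1}^{\infty} \omega_i\,\mathcal{A}w_i.
\]
Because $\sum_{i\ge 1}\|\mathcal{A}w_i\|_{H_2}^2 = \|\mathcal{A}\|_{\HS}^2 < \infty$, Kolmogorov's two-series theorem (as recalled at the end of Section~\ref{section:random}) ensures that the series for $y$ converges $\mathbb P$-a.s.~in $H_2$ and in $L^2(\mathbb P;H_2)$, so $(y_n,y)$ is a well-defined random element of $H_2\times H_2$, whose joint law defines a coupling $\gamma$. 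To identify the marginals I would invoke the same facts from Section~\ref{section:random}: every finite partial sum $\sum_{i=1}^N \omega_i\mathcal{A}w_i$ is a centered Gaussian in $H_2$ with covariance $\mathcal{A}W_N W_N^*\mathcal{A}^*$; taking $N=n$ yields $y_n\sim\mu_n$, while passing to $N\to\infty$ and using that $W_N W_N^*\to\Id$ strongly on $H_1$ gives $y\sim\mu$ (via \cite[Proposition~1.16]{DaPrato2006}).

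It then remains to compute the transport cost. By independence of the $\omega_i$ and orthogonality in $L^2(\mathbb P;H_2)$,
\[
\mathbb E\,\|y - y_n\|_{H_2}^2 \;=\; \mathbb E\Big\|\sum_{i>n}\omega_i\,\mathcal{A}w_i\Big\|_{H_2}^2 \;=\; \sum_{i>n}\|\mathcal{A}w_i\|_{H_2}^2.
\]
On the other hand, expanding $\|\mathcal{A}-\mathcal{A}W_nW_n^*\|_{\HS}^2 = \|\mathcal{A}(\Id - W_nW_n^*)\|_{\HS}^2$ in the orthonormal basis $\{w_i\}_i$ of $H_1$ and observing that $(\Id - W_nW_n^*)w_i$ vanishes for $i\le n$ and equals $w_i$ for $i>n$ gives exactly the same sum. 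Taking the square root and infimizing over couplings delivers the inequality.

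No step presents a genuine obstacle: the substantive content is the choice of coupling, and the Hilbert-space background on series representations of centered Gaussian random elements in Section~\ref{section:random} supplies measurability, convergence, and the distributional identification of both marginals. If anything is delicate, it is the verification that $y\sim\mu$ in the limit $N\to\infty$, which I would handle by citing the convergence-of-covariance criterion already invoked in Section~\ref{section:random}.
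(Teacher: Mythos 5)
Your proposal is correct, but it proves the lemma by a genuinely different route than the paper. You construct an explicit synchronous coupling: drive $y_n$ and $y$ by the same i.i.d.\ Gaussian sequence through the expansions $y_n=\sum_{i\le n}\omega_i\,\mathcal{A}w_i$ and $y=\sum_{i}\omega_i\,\mathcal{A}w_i$, compute the cost $\sum_{i>n}\|\mathcal{A}w_i\|_{H_2}^2=\|\mathcal{A}-\mathcal{A}W_nW_n^*\|_{\HS}^2$, and take the infimum. The paper instead invokes the closed-form (Gelbrich) expression for the Wasserstein-$2$ distance between two centered Gaussian measures,
\[
W_2(\mu_n,\mu)^2=\Tr(\mathcal{A}\mathcal{A}^*)+\Tr(\mathcal{A}W_nW_n^*\mathcal{A}^*)-2\Tr\!\Big(\big((\mathcal{A}W_nW_n^*\mathcal{A}^*)^{1/2}\mathcal{A}\mathcal{A}^*(\mathcal{A}W_nW_n^*\mathcal{A}^*)^{1/2}\big)^{1/2}\Big),
\]
and bounds the cross term using $\mathcal{A}\mathcal{A}^*\succeq\mathcal{A}W_nW_n^*\mathcal{A}^*$. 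Your approach is more elementary and self-contained: it only uses the series-representation machinery of Section~\ref{section:random}, and it exhibits a concrete transport plan achieving the bound rather than appealing to an external formula. The only step requiring care, which you correctly flag, is that $y\sim\mu$: the basis $\{w_i\}_i$ does not diagonalize $\mathcal{A}\mathcal{A}^*$, so you must either check that the rotated coefficients $\tilde\omega_j=\sum_i\omega_i\langle v_j,w_i\rangle_{H_1}$ are again i.i.d.\ standard Gaussians (Parseval plus rotation invariance), or argue via $L^2$ convergence of the Gaussian partial sums with covariances $\mathcal{A}W_NW_N^*\mathcal{A}^*\to\mathcal{A}\mathcal{A}^*$, exactly as the paper does for~\eqref{eq: series}; either argument closes the gap. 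What the paper's route buys is brevity (given the citation to Gelbrich's theorem) and the exact distance formula as an intermediate quantity; what yours buys is independence from that result and a constructive coupling in the spirit of how $\Gamma(\mu_n,\mu)$ is later used in \Cref{thm:convergenceexpect}.
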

\begin{proof}
Theorem~3.5 in~\cite{gelbrich1990on} implies that $W_2(\mu_n, \mu)^2$ admits the explicit expression
\[
  \Tr(\mathcal{A}\mathcal{A}^*) + \Tr(\mathcal{A}W_nW_n^*\mathcal{A}^*) 
        -2\Tr\!\left(\big((\mathcal{A}W_nW_n^*\mathcal{A}^*)^{1/2}\mathcal{A}\mathcal{A}^*(\mathcal{A}W_nW_n^*\mathcal{A}^*)^{1/2}\big)^{1/2}\right).
\]
Noting that $\mathcal{A}\mathcal{A}^* \succeq \mathcal{A}W_nW_n^*\mathcal{A}^*\succeq 0$, we can upper bound the last term in this expression by $-2\Tr(\mathcal{A}W_nW_n^*\mathcal{A}^*)$. In summary,
\[
 W_2(\mu_n, \mu)^2 \le \Tr(\mathcal{A}\mathcal{A}^*) - \Tr(\mathcal{A}W_nW_n^*\mathcal{A}^*) = \|\mathcal{A} - \mathcal{A}W_nW_n^*\|_{\HS}^2,
\]
    as desired.
\end{proof}

\Cref{lemma:wassersteinnew} can be used to derive convergence statements on the randomized SVD applied 
to the discretization $A_{m,n}$ as $m,n\to \infty$.  In particular, the following theorem shows convergence in expectation (under optimal couplings).
The expected value is taken with respect to the joint distribution
$\gamma^{\otimes(k+p)}$ of the $k+p$ independent column pairs of $(Y,Y_n)$. 

\begin{theorem} \label{thm:convergenceexpect}
Consider a Hilbert-Schmidt operator $\mathcal A: H_1 \to H_2$ 
of rank at least $k+p+2$. 
Suppose that the compression
${\mathcal{A}}_{m,n} = Z_m A_{m,n} W_n^*$, defined as above, satisfies $\| {\mathcal{A}} - {\mathcal{A}}_{m,n} \|_{\HS} \to 0$ as $m,n\to \infty$. Then the approximations $\widehat{\mathcal{A}}$ and $\widehat{\mathcal{A}}_{m,n}$ 
produced by applying the randomized SVD with $k+p$ samples to 
$\mathcal A$ and its discretization $A_{m,n}$, respectively, satisfy
\[
 \inf_{\gamma \in \Gamma(\mu_n,\mu)} \mathbb E \| \widehat{\mathcal{A}} - \widehat{\mathcal{A}}_{m,n} \|_{\HS} \le C 
 \| {\mathcal{A}} - {\mathcal{A}}_{m,n} \|_{\HS},
\]
for some constant $C$ (independent of $m,n$), where the expectation is taken with respect to $\gamma^{\otimes(k+p)}$. In particular, $\lim\limits_{m,n \to \infty} \left[\inf_{\gamma \in \Gamma(\mu_n,\mu)} \mathbb E \| \widehat{\mathcal{A}} - \widehat{\mathcal{A}}_{m,n} \|_{\HS}\right] = 0$.
\end{theorem}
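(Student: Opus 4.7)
The plan is to produce a specific coupling in $\Gamma(\mu_n,\mu)^{\otimes(k+p)}$ and a deterministic error decomposition, then bound each piece. For the coupling, I take the optimal Wasserstein-$2$ coupling from \Cref{lemma:wassersteinnew}, applied independently to each of the $k+p$ column pairs, which yields $\mathbb{E}\|y-y_n\|_{H_2}^2\le\|\mathcal{A}-\mathcal{A}W_nW_n^*\|_{\HS}^2$ per column. Summing over the $k+p$ independent column pairs gives $\mathbb{E}\|Y-Y_n\|_{\HS}^2\le(k+p)\|\mathcal{A}-\mathcal{A}W_nW_n^*\|_{\HS}^2$, and the Pythagorean identity
\[
\|\mathcal{A}-\mathcal{A}_{m,n}\|_{\HS}^2=\|\mathcal{A}(\Id-W_nW_n^*)\|_{\HS}^2+\|(\Id-Z_mZ_m^*)\mathcal{A}W_nW_n^*\|_{\HS}^2
\]
bounds this further by $(k+p)\|\mathcal{A}-\mathcal{A}_{m,n}\|_{\HS}^2$. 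Since the product coupling is a valid element of $\Gamma(\mu_n,\mu)^{\otimes(k+p)}$, any bound I derive is an upper bound for the infimum.

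For the deterministic decomposition, let $P=QQ^*$, set $\tilde Y_n=Z_mZ_m^*Y_n$, and let $\tilde P$ denote the orthogonal projector in $H_2$ onto $\range(\tilde Y_n)$. Then $\tilde P=Z_mQ_mQ_m^*Z_m^*$ and $\widehat{\mathcal{A}}_{m,n}=\tilde P\mathcal{A}W_nW_n^*$, so
\[
\widehat{\mathcal{A}}-\widehat{\mathcal{A}}_{m,n}=(P-\tilde P)\mathcal{A}+\tilde P\mathcal{A}(\Id-W_nW_n^*).
\]
Using $\|AB\|_{\HS}\le\|A\|_{\op}\|B\|_{\HS}$ together with $\|\tilde P\|_{\op}\le 1$ gives $\|\widehat{\mathcal{A}}-\widehat{\mathcal{A}}_{m,n}\|_{\HS}\le\|P-\tilde P\|_{\op}\|\mathcal{A}\|_{\HS}+\|\mathcal{A}-\mathcal{A}_{m,n}\|_{\HS}$, so the problem reduces to showing $\mathbb{E}\|P-\tilde P\|_{\op}\le C'\|\mathcal{A}-\mathcal{A}_{m,n}\|_{\HS}$ for some constant $C'$.

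For the projector difference, I use a Wedin-style bound: on the good event $\{\|Y-\tilde Y_n\|_{\op}\le\sigma_{\min}(Y)/2\}$, $\tilde Y_n$ has full column rank $k+p$ and $\|P-\tilde P\|_{\op}\le 2\|Y-\tilde Y_n\|_{\op}/\sigma_{\min}(Y)$; on the complement the trivial bound $\|P-\tilde P\|_{\op}\le 1$ combined with Markov's inequality applied to $2\|Y-\tilde Y_n\|_{\op}/\sigma_{\min}(Y)$ delivers the same right-hand side. This yields $\mathbb{E}\|P-\tilde P\|_{\op}\le 4\,\mathbb{E}[\|Y-\tilde Y_n\|_{\op}/\sigma_{\min}(Y)]$, which Cauchy--Schwarz factorizes as $(\mathbb{E}\|Y-\tilde Y_n\|_{\op}^2)^{1/2}(\mathbb{E}\sigma_{\min}(Y)^{-2})^{1/2}$. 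The first factor is $O(\sqrt{k+p}\,\|\mathcal{A}-\mathcal{A}_{m,n}\|_{\HS})$: since $\|Y-\tilde Y_n\|_{\op}\le\|Y-Y_n\|_{\HS}+\|(\Id-Z_mZ_m^*)Y_n\|_{\HS}$, one applies the coupling bound to the first summand and the identity $\mathbb{E}\|B\Omega_n'\|_{\HS}^2=(k+p)\|B\|_{\HS}^2$ (with $B=(\Id-Z_mZ_m^*)\mathcal{A}W_n$) to the second.

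The main obstacle, and the place where the rank $\ge k+p+2$ hypothesis is essential, is bounding $\mathbb{E}\sigma_{\min}(Y)^{-2}$. Writing $Y=U\Sigma\Omega$ with $\Omega$ an infinite quasi-matrix whose rows $\omega_i$ are i.i.d.~standard Gaussians in $\R^{k+p}$, one has $Y^*Y=\sum_{i\ge 1}\sigma_i^2\,\omega_i\omega_i^*\succeq\sigma_{k+p+2}^2\,\Omega_{k+p+2}^*\Omega_{k+p+2}$, where $\Omega_{k+p+2}\in\R^{(k+p+2)\times(k+p)}$ is a standard Gaussian matrix with precisely two more rows than columns. The classical inverse-Wishart identity $\mathbb{E}[(\Omega_{k+p+2}^*\Omega_{k+p+2})^{-1}]=I_{k+p}$ (finite because $N-r-1=1$) then gives $\mathbb{E}\sigma_{\min}(Y)^{-2}\le(k+p)/\sigma_{k+p+2}^2<\infty$. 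Assembling these estimates produces the claimed bound with a constant $C$ depending on $k$, $p$, $\|\mathcal{A}\|_{\HS}$, and $\sigma_{k+p+2}$; the final convergence statement is then immediate from $\|\mathcal{A}-\mathcal{A}_{m,n}\|_{\HS}\to 0$.
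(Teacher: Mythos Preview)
Your argument is correct and follows essentially the same route as the paper: both use the Wasserstein coupling of \Cref{lemma:wassersteinnew} for the columns, split $\widehat{\mathcal A}-\widehat{\mathcal A}_{m,n}$ into a projector-difference term acting on $\mathcal A$ plus the discretization remainder $\|\mathcal A(\Id-W_nW_n^*)\|_{\HS}$, bound the projector difference by a Wedin-type estimate together with Cauchy--Schwarz, and control $\mathbb E\,\sigma_{\min}(Y)^{-2}$ via the inverse-Wishart moment that the rank $\ge k+p+2$ hypothesis makes finite. Your version is marginally more economical in that you jump directly from $P_Y$ to $P_{\tilde Y_n}$ (the paper inserts the intermediate $P_{Y_n}$) and your good-event trick requires only $\mathbb E\|Y^\dagger\|_{\op}^2<\infty$, whereas the paper also invokes $\mathbb E\|Y_{m,n}^\dagger\|_{\op}^2<\infty$ and hence needs $\sigma_{k+p+2}(A_{m,n})>0$ for $m,n$ large enough.
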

\begin{proof}
To simplify notation, we set $\varepsilon \coloneqq \| {\mathcal{A}} - {\mathcal{A}}_{m,n} \|_\HS$ and $r = k+p$. Additional to the (random) matrix $Y$ of \Cref{alg:infrsvd}, we will make use of
$Y_n$ in~\eqref{eq:Yn} and $Y_{m,n} = Z_m Z_m^* Y_n = Z_m A_{m,n} \Omega'_n$. 
In view of~\eqref{eq:hatAmn}, this allows us to express 
\[
 \widehat{\mathcal{A}}_{m,n} = Z_m P_{A_{m,n} \Omega'_n} A_{m,n} W_n^* = P_{Y_{m,n}} \mathcal{A} W_n W_n^*,
\]
where $P_{X}$ denotes the orthogonal projection onto the range of a (quasi-)matrix $X$.
Therefore,
\begin{align}
 \| \widehat{\mathcal{A}} - \widehat{\mathcal{A}}_{m,n} \|_{\HS} &= 
 \| P_Y \mathcal{A} - P_{Y_{m,n}} \mathcal{A} W_n W_n^*  \|_\HS \nonumber \\
 &\le  \| (P_Y - P_{Y_{m,n}}) \mathcal{A}  \|_{\HS} + \|P_{Y_{m,n}} \mathcal{A} (\mathrm{Id} - W_n W_n^*)  \|_{\HS} \nonumber \\
 &\le  \| (P_Y - P_{Y_{m,n}}) \mathcal{A}  \|_{\HS} + \varepsilon \nonumber \\
 &\le  \| (P_Y - P_{Y_{n}}) \mathcal{A}  \|_{\HS} + \| ( P_{Y_n} - P_{Y_{m,n}}) \mathcal{A}  \|_{\HS} +\varepsilon.\label{eq:blubber}
\end{align}
It remains to bound the first two terms of~\eqref{eq:blubber}.
By the assumptions, the $(r+2)$-nd singular value of $\mathcal A$ is positive, $\sigma \coloneqq \sigma_{r+2}(\mathcal A)>0$, 
and hence $\sigma_{r+2}(A_{m,n}) = \sigma_{r+2}(\mathcal A_{m,n}) > \sigma/2$ for $m,n$ sufficiently large. From their definitions, it then follows that  the ranges of $Y, Y_n, Y_{m,n}$ all have dimension $\rev{r}$ almost surely. In particular,
\[
 \|Y^\dagger\|_{\op} = \sigma_{r}(Y)^{-1} \le \sigma_{r}(U^*_{r+2}Y)^{-1} = 
 \sigma_{r}(\Sigma_{r+2} \Omega_{r+2})^{-1} \le \|\Omega_{r+2}^\dagger\|_2 / \sigma,
\]
where we used the definition~\eqref{eq:kl_expansion} of the columns of $Y$, and the (quasi-)matrices 
$U_{r+2}$ and $\Sigma_{r+2}$ contain the first $r+2$ left singular vectors and singular values of $\mathcal A$.
The Gaussian random matrix $\Omega_{r+2} \in \R^{(r+2)\times r}$ collects the random expansion coefficients,
as in~\eqref{eq:BigOmegak}. By~\cite[Lemma 3.1]{nakatsukasa2020fast}, $\mathbb E \|\Omega_{r+2}^\dagger\|^2_{\op}$ is bounded by a constant (only depending on $r$). An analogous argument applies to $Y_{m,n}$ and, thus, there is a constant
$c$ (depending only on $r$ and $\sigma$) such that
\begin{equation} \label{eq:constants}
\mathbb E \|Y^\dagger\|^2_{\op} \le c,\qquad \mathbb E \|Y_{m,n}^\dagger\|^2_{\op} \le c.
\end{equation}
\Cref{lemma:projpert} and the Cauchy-Schwarz inequality imply that
\[
    \mathbb{E}\|P_{Y} - P_{Y_n}\|_{\op}  \leq \mathbb{E}\left[\|Y^{\dagger}\|_{\op}\|Y-Y_n\|_{\op}\right]
    \leq \sqrt{\mathbb{E}\|Y^{\dagger}\|_{\op}^2}\sqrt{\mathbb{E}\|Y-Y_n\|_{\op}^2},
\]
where the expectation is taken with respect to $\gamma^{\otimes (k+p)}$ for any $\gamma \in \Gamma(\mu_n,\mu)$.
Now,
\begin{equation*} %\label{eq:crucialstep}
 \inf_{\gamma \in \Gamma(\mu_n,\mu)} \mathbb{E}\|Y-Y_n\|^2_{\op} \le 
 \inf_{\gamma \in \Gamma(\mu_n,\mu)} \mathbb{E}\|Y-Y_n\|^2_{\HS} = \sum_{j = 1}^r \inf_{\gamma \in \Gamma(\mu_n,\mu)} \mathbb{E} \|y_j - y_{n,j} \|_2^2 \le r \varepsilon^2,
\end{equation*}
where the last step follows from Lemma~\ref{lemma:wassersteinnew}. Combining the inequalities above gives
\[
 \inf_{\gamma \in \Gamma(\mu_n,\mu)} \mathbb E \| (P_Y - P_{Y_{n}}) \mathcal{A}  \|_{\HS} \le 
 \| \mathcal{A}  \|_{\HS}\cdot \inf_{\gamma \in \Gamma(\mu_n,\mu)} \mathbb E \| P_Y - P_{Y_{n}}\|_{\op}
 \le \sqrt{c r}\| \mathcal{A}  \|_{\HS} \varepsilon,
\]
establishing the desired bound for the first term in~\eqref{eq:blubber}. The second term is bounded in an analogous fashion. Using $\mathbb{E}\|P_{Y_n} - P_{Y_{m,n}}\|_{\op}  \leq \sqrt{\mathbb{E}\|Y_{m,n}^{\dagger}\|_{\op}^2}\sqrt{\mathbb{E}\|Y_n - Y_{m,n}\|_{\op}^2}$,~\eqref{eq:constants}, and 
\begin{eqnarray}
 \mathbb E \|Y_n - Y_{m,n} \|^2_{\op} & = & \mathbb E \| (\mathrm{Id}-Z_mZ_m^*) \mathcal  A W_n \Omega_n' \|^2_{\op} 
  \le  \mathbb E \| (\mathrm{Id}-Z_mZ_m^*) \mathcal A W_n \Omega_n' \|^2_{\HS} \nonumber \\
 &=& r \| (\mathrm{Id}-Z_mZ_m^*) \mathcal  A W_n \|^2_{\HS} \le r \varepsilon^2 \label{eq:hsid}
\end{eqnarray}
where the second equality in~\eqref{eq:hsid} is standard~\cite[Proposition 10.1]{rsvd},
we arrive at the estimate
$
\mathbb E \| ( P_{Y_n} - P_{Y_{m,n}}) \mathcal{A}  \|_{\HS} \le \sqrt{c r}\| \mathcal{A}  \|_{\HS} \varepsilon.
$
In summary, the sum~\eqref{eq:blubber} is bounded by some constant times $\varepsilon$ when taking the 
infimum with respect to $\gamma$.
\end{proof}

\section{Nyström approximation in Hilbert spaces}\label{section:nystrom}

Given a symmetric positive semi-definite (SPSD) matrix $T_n \in \mathbb{R}^{n \times n}$, the basic form of the Nyström approximation \cite{gittensmahoney, tropp2023randomized,perssonboullekressner2025} proceeds by sampling a random Gaussian matrix $\Omega_{n} \in \mathbb{R}^{n \times r}$ and forming the rank-$r$ approximation
\begin{equation}\label{eq:nystrom}
    T_n \approx T_n \Omega_n (\Omega_n^* T_n \Omega_n)^{\dagger}(T_n \Omega_n)^*,
\end{equation}
where $\dagger$ denotes the Moore-Penrose pseudoinverse. Compared to the randomized SVD, this approximation bypasses the need for a second multiplication of $T_n$ (with $Q$) and preserves the SPSD property. The purpose of this section is to discuss a suitable infinite-dimensional extension of~\eqref{eq:nystrom}. While~\cite{perssonboullekressner2025} parallels the developments of~\cite{boulle2022learning}, which replaces $\Omega_n$ by random elements from a Gaussian process, we will develop and analyze an extension in the spirit of the infinite-dimensional randomized SVD of~\Cref{alg:infrsvd}.

It is instructive to recall a well-known connection between the Nyström approximation and the randomized SVD; see, e.g.,~\cite[Lemma 1]{gittensmahoney} and \cite[Proof of Proposition 1]{gittens}. For this purpose, we consider the alternative representation
\begin{equation}\label{eq:gram}
    T_n \Omega_n (\Omega_n^* T_n \Omega_n)^{\dagger}(T_n \Omega_n)^* = T_n^{1/2}P_{T_n^{1/2} \Omega_n} T_n^{1/2},
\end{equation}
where $T_n^{1/2}$ denotes the principal square-root of $T_n$ and $P_{T_n^{1/2} \Omega_n}$ denotes the orthogonal projection onto $\range(T_n^{1/2} \Omega_n)$. 
Because the error is SPSD, it follows that its nuclear norm coincides with its trace and satisfies
\begin{equation*}
    \|T_n - T_n \Omega_n (\Omega_n^* T_n \Omega_n)^{\dagger}(T_n \Omega_n)^*\|_{\T} = \|T_n^{1/2} - P_{T_n^{1/2} \Omega_n} T_n^{1/2}\|_{\F}^2.
\end{equation*}
Now, the latter expression is precisely the squared Frobenius norm error of the randomized SVD applied to $T_n^{1/2}$. Consequently, error bounds for the randomized SVD can be immediately turned into error bounds for the Nyström approximation.

\subsection{Algorithm}\label{section:infnystrom}

The representation~\eqref{eq:gram} of the Nystr\"om approximation is a suitable starting point for its extension to
a SPSD trace-class operator $\mathcal{T}:H \to H$ on a separable Hilbert space $(H,\langle\cdot,\cdot\rangle_H)$. 
Let $Y$ be a quasi-matrix with $r$ columns drawn independently from $\mathcal{N}_{H}(0,\mathcal{T})$. Then~\eqref{eq:gram} suggests to define the infinite-dimensional Nyström approximation as
\begin{equation}\label{eq:infnystrom}
    \mathcal{T} \approx \mathcal{T}^{1/2} P_{Y} \mathcal{T}^{1/2}.
\end{equation}
Noting that this approximation 
can be equivalently written as $\mathcal{T}^{1/2} Q(\mathcal{T}^{1/2} Q)^*$
for 
an orthonormal basis  $Q$ of $\range(Y)$, we arrive at \Cref{alg:nystrom}.
\begin{algorithm}[t]
\small
\caption{Infinite-dimensional Nyström approximation}
\label{alg:nystrom}
\begin{algorithmic}[1]
\Input
SPSD trace class operator $\mathcal{T}:H \to H$, target rank $k$, oversampling parameter $p$.
\Output
Rank $k+p$ approximation $\widehat{\mathcal{T}}:= \mathcal{T}^{1/2}Q (\mathcal{T}^{1/2} Q)^*$ to $\mathcal{T}$ in factored form.
\State Sample a random quasi-matrix $Y = \begin{bmatrix} y_1, & \cdots &, y_{k+p} \end{bmatrix}$ with $y_j \sim \mathcal{N}_{H}(0,\mathcal{T})$ i.i.d.
\State Take an $H$-orthonormal basis $Q = \begin{bmatrix} q_1, & \cdots &, q_{k+p}\end{bmatrix}$ for $\range(Y)$. 
\State Apply operator $\mathcal{T}^{1/2}$ to $Q$: $\mathcal{T}^{1/2} Q = \begin{bmatrix} \mathcal{T}^{1/2}q_1, & \cdots &, \mathcal{T}^{1/2}q_{k+p} \end{bmatrix}$. 
\State Return $\widehat{\mathcal{T}}:= \mathcal{T}^{1/2}Q (\mathcal{T}^{1/2} Q)^*$ in factored form.
\end{algorithmic}
\end{algorithm}
Similar to the infinite-dimensional randomized SVD, this is clearly an \emph{idealized} algorithm.
In~\Cref{section:finite_nystrom}, we will propose a practical version for a discretization of $\mathcal{T}$, which resembles the original representation~\eqref{eq:nystrom} and, in particular, does not require access to the square root of the operator or its discretization.

As in the case of the randomized SVD, \Cref{alg:nystrom} satisfies error bounds that match the finite-dimensional bounds~\cite{gittensmahoney}.

\begin{theorem}
    Let $\{\sigma_i\}_i$ be the singular values of $\mathcal{T}$. If $p \geq 2$, the approximation $\widehat{\mathcal{T}} = \mathcal{T}^{1/2}Q (\mathcal{T}^{1/2} Q)^*$ returned by \Cref{alg:nystrom} satisfies
    \begin{equation*}
        \mathbb{E}\|\mathcal{T} - \widehat{\mathcal{T}}\|_{\T} \leq \left(1 + \frac{k}{p-1}\right) \sum\limits_{i > k} \sigma_i.
    \end{equation*}
    Furthermore, if $k \geq 2$ and $p \geq 4$, then for all $u,t, \geq 1$ the inequality
    \begin{equation}
        \mathbb{E}\|\mathcal{T} - \widehat{\mathcal{T}}\|_{\T} \leq \sum\limits_{i > k} \sigma_i + \widehat{\eta}^2, \quad \text{with}
    \quad \widehat{\eta} = t \sqrt{\frac{3k}{k+1}} \Big(\sum\limits_{i > k} \sigma_i\Big)^{1/2} + ut \frac{e\sqrt{k+p}}{p+1} \sigma_{k+1}^{1/2},\label{eq:nystromgamma}
    \end{equation}
    holds with probability at least $1-2t^{-p} - e^{-u^2/2}$.
\end{theorem}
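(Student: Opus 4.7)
The strategy is to reduce the claim to Theorem~\ref{thm:InfRandSVD} (and its structural precursor \Cref{lemma:structural}) applied to the Hilbert--Schmidt operator $\mathcal{A} = \mathcal{T}^{1/2}$. Since $\mathcal{T}$ is SPSD trace class, $\mathcal{T}^{1/2}$ is Hilbert--Schmidt with singular values $\{\sigma_i^{1/2}\}_i$, and self-adjointness yields $\mathcal{T}^{1/2}(\mathcal{T}^{1/2})^* = \mathcal{T}$. In particular, drawing $y_j \sim \mathcal{N}_H(0,\mathcal{T})$ in \Cref{alg:nystrom} is identical to the sketching step of \Cref{alg:infrsvd} applied to $\mathcal{A} = \mathcal{T}^{1/2}$, and the left singular vectors of $\mathcal{T}^{1/2}$ are the eigenvectors of~$\mathcal{T}$.

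The pivotal step is the algebraic identity that converts the trace-norm error of \Cref{alg:nystrom} into the squared Hilbert--Schmidt error of \Cref{alg:infrsvd} on $\mathcal{T}^{1/2}$. Writing $\widehat{\mathcal{T}} = \mathcal{T}^{1/2} P_Y \mathcal{T}^{1/2}$ and $C := (\Id - P_Y)\mathcal{T}^{1/2}$, the idempotence $(\Id - P_Y)^2 = \Id - P_Y$ gives $\mathcal{T} - \widehat{\mathcal{T}} = C^* C$, which is SPSD, so
\[
\|\mathcal{T} - \widehat{\mathcal{T}}\|_{\T} = \Tr(C^*C) = \|C\|_{\HS}^2 = \|(\Id - P_Y)\mathcal{T}^{1/2}\|_{\HS}^2.
\]
The right-hand side is exactly the squared error that \Cref{alg:infrsvd} produces for $\mathcal{T}^{1/2}$.

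From here, the expectation bound follows by directly invoking Theorem~\ref{thm:InfRandSVD} with the singular values $\sigma_i$ of $\mathcal{A}$ replaced by $\sigma_i^{1/2}$: squaring is avoided because we already have a squared-norm identity, and $\sum_{i>k}(\sigma_i^{1/2})^2 = \sum_{i>k}\sigma_i$. For the tail bound, rather than squaring the probabilistic inequality of Theorem~\ref{thm:InfRandSVD} (which would produce unwanted cross terms), the cleaner route is to apply \Cref{lemma:structural} to $\mathcal{T}^{1/2}$,
\[
\|(\Id - P_Y)\mathcal{T}^{1/2}\|_{\HS}^2 \le \sum_{i>k}\sigma_i + \|(\Id - U_k U_k^*) Y \Omega_k^{\dagger}\|_{\HS}^2,
\]
and reuse the probabilistic estimate for $X = \|(\Id - U_k U_k^*) Y \Omega_k^{\dagger}\|_{\HS}^2$ from the proof of Theorem~\ref{thm:InfRandSVD} with $\sigma_i \mapsto \sigma_i^{1/2}$, which gives $X \le \widehat{\eta}^2$ with probability at least $1 - 2t^{-p} - e^{-u^2/2}$.

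The only mildly delicate point is tracking how $\eta$ transforms under $\sigma_i \mapsto \sigma_i^{1/2}$; both terms in the formula for $\eta$ scale with single powers of the singular values, producing the quantities $(\sum_{i>k}\sigma_i)^{1/2}$ and $\sigma_{k+1}^{1/2}$ that appear in $\widehat{\eta}$ in~\eqref{eq:nystromgamma}. No fundamentally new probabilistic work is required; the main conceptual work is recognizing the square-root reduction and verifying that avoiding the squaring step of the tail bound is precisely what produces the clean additive form $\sum_{i>k}\sigma_i + \widehat{\eta}^2$.
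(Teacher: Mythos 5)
Your proposal is correct and follows essentially the same route as the paper: the paper's proof is exactly the identity $\|\mathcal{T}-\widehat{\mathcal{T}}\|_{\T}=\|(\Id-QQ^*)\mathcal{T}^{1/2}\|_{\HS}^2$ followed by rerunning the proof of \Cref{thm:InfRandSVD} with $\mathcal{A}=\mathcal{T}^{1/2}$, i.e.\ invoking \Cref{lemma:structural} and the tail estimate on $X$ before taking square roots, which is precisely what you do. Your observation that stopping at the squared form yields the additive bound $\sum_{i>k}\sigma_i+\widehat{\eta}^2$ matches the paper's (implicit) reasoning.
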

\begin{proof}
    Noting $\|\mathcal{T} - \widehat{\mathcal{T}}\|_{\T} = \|\mathcal{T}^{1/2} - QQ^* \mathcal{T}^{1/2}\|_{\HS}^2$ and following the proof of \Cref{thm:InfRandSVD} with $\mathcal{A} = \mathcal{T}^{1/2}$ yields the desired result.
\end{proof}

\subsection{Finite-dimensional approximation}\label{section:finite_nystrom}

We now aim at presenting a practical Nyström approximation by replacing the idealized samples from $\mathcal{N}_H(0,\mathcal{T})$ in \Cref{alg:nystrom} with samples from an approximating Gaussian measure $\mathcal{N}_H(0,\mathcal{T}_n)$, which is supported on a finite-dimensional subspace of $H$. Consider an orthonormal system $\{w_i\}_i$ in $H$ and set $W_n = \begin{bmatrix} w_1, & \cdots &, w_n\end{bmatrix}$. According to~\eqref{eq:infnystrom}, we define a discretized Nyström approximation as
\begin{equation*}%\label{eq:infnystrom_step1}
    \widehat{\mathcal{T}}_n := \mathcal{T}^{1/2}P_{Y_n} \mathcal{T}^{1/2},
\end{equation*}
where $Y_n$ is a quasi-matrix with $k$ columns drawn independently from $\mathcal{N}_{H}(0,\mathcal{T}^{1/2}W_nW_n^* \mathcal{T}^{1/2})$.
Rewriting $Y_n = \mathcal{T}^{1/2} W_n \Omega_n$ for an $n \times r$ Gaussian random matrix
$\Omega_n$, one obtains the equivalent representation
\begin{equation}\label{eq:discrete_nystrom}
    \widehat{\mathcal{T}}_n =\mathcal{T} W_n \Omega_n (\Omega_n^* W_n^*\mathcal{T}W_n \Omega_n)^{\dagger} (\mathcal{T} W_n \Omega_n)^*,
\end{equation}
which closely resembles the finite-dimensional Nyström approximation \eqref{eq:nystrom}. In practice, following the developments from \Cref{sec:adaptive},
this suggests to ``evaluate'' each column of $\mathcal{T} W_n \Omega_n$ by expanding it into a complete(d) orthonormal system $\{w_i\}_i$ of $H$ up to machine precision. 

Using the relationship between the randomized SVD 
\begin{equation*}
    \|\mathcal{T} - \widehat{\mathcal{T}}_n\|_{\T} = \|\mathcal{T}^{1/2} - P_{\mathcal{T}^{1/2} W_n \Omega_n} \mathcal{T}^{1/2}\|_{\HS}^2,
\end{equation*}
together with \eqref{eq:boulletownsend_inequality} and $\|\mathcal{T}^{1/2} - W_nW_n^*\mathcal{T}^{1/2}\|_{\HS}^2 = \Tr(\mathcal{T}) - \Tr(W_n^* \mathcal{T} W_n)$, and applying \Cref{thm:main} with $\mathcal{T}^{1/2} = \mathcal{A}$ yields the following result.
\begin{theorem}\label{theorem:discrete_nystrom}
    Consider an SPSD trace-class operator $\mathcal{T}:H \to H$ with singular values $\{\sigma_i\}_i$. Let $W_n :\mathbb{R}^n \to H$ be a quasi-matrix with orthonormal columns. For a Gaussian random matrix $\Omega_n \in \mathbb{R}^{n \times (k+p)}$, define the approximation
    \begin{equation*}
        \widehat{\mathcal{T}}_n = \mathcal{T}\rev{W_n} \Omega_n (\Omega_n^* W_n^* \mathcal{T}W_n \Omega_n)^{\dagger} (\mathcal{T}\rev{W_n} \Omega_n)^*.
    \end{equation*}
    Then, if $k \geq 2$ we have
\begin{equation*} 
 \mathbb E \|\mathcal{T} - \widehat{\mathcal{T}}_n \|_{\T} \le \Tr(\mathcal{T}) - \Tr(W_n^*\mathcal{T}W_n) + \left( 1+ \frac{k}{p-1} \right) \sum_{i>k}\sigma_i.
\end{equation*}
Furthermore, if $k \geq 2$ and $p \geq 4$, then for all $u,t \geq 1$ the inequality
\begin{equation*}
 \|\mathcal{T} - \widehat{\mathcal{T}}_{n} \|_{\T} \le \Tr(\mathcal{T}) - \Tr(W_n^* \mathcal{T} W_n)
 + \sum_{i>k}\sigma_i + \widehat{\eta}^2
\end{equation*}
holds with probability at least $1-2t^{-p} - e^{-u^2/2}$ for $\widehat{\eta}$ defined as in~\eqref{eq:nystromgamma}.
\end{theorem}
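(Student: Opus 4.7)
The plan is to reduce the statement to~\Cref{thm:main} applied to the Hilbert--Schmidt operator $\mathcal{A} := \mathcal{T}^{1/2}$, mirroring the finite-dimensional relationship between the Nyström approximation and the randomized SVD that underlies~\eqref{eq:gram}. Writing $Y_n := \mathcal{T}^{1/2}W_n\Omega_n$ and using that $\widehat{\mathcal{T}}_n = \mathcal{T}^{1/2}P_{Y_n}\mathcal{T}^{1/2}$, I first exploit the fact that $\mathcal{T}-\widehat{\mathcal{T}}_n$ is SPSD, so that its trace norm coincides with its trace; a short Pythagorean calculation then gives the identity
\[
\|\mathcal{T}-\widehat{\mathcal{T}}_n\|_{\T} = \|(\Id-P_{Y_n})\mathcal{T}^{1/2}\|_{\HS}^2 = \|(\Id-P_{\mathcal{A}W_n\Omega_n})\mathcal{A}\|_{\HS}^2,
\]
which identifies the Nyström trace-norm error with the \emph{squared} Hilbert--Schmidt error of a randomized-SVD-type approximation of $\mathcal{A}$ sketched by $\mathcal{A}W_n\Omega_n$.

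The second step is to invoke the framework of~\Cref{sec:approxerror}. Choosing $Z_m$ as an orthonormal basis of $\range(\mathcal{A}W_n)$, as in the remark after~\Cref{thm:main}, forces $\mathcal{A}_{m,n} = \mathcal{A}W_nW_n^*$, so that
\[
\|\mathcal{A}-\mathcal{A}_{m,n}\|_{\HS}^2 = \|\mathcal{T}^{1/2}(\Id-W_nW_n^*)\|_{\HS}^2 = \Tr(\mathcal{T}) - \Tr(W_n^*\mathcal{T}W_n),
\]
while the inequality~\eqref{eq:boulletownsend_inequality} yields $\|\mathcal{T}-\widehat{\mathcal{T}}_n\|_{\T} \le \|\mathcal{A}-\widehat{\mathcal{A}}_{m,n}\|_{\HS}^2$. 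Since $\sigma_i(\mathcal{A})^2 = \sigma_i$ and $\sigma_{k+1}(\mathcal{A}) = \sigma_{k+1}^{1/2}$, the quantity $\eta$ produced by~\Cref{thm:main} applied to $\mathcal{A} = \mathcal{T}^{1/2}$ is precisely $\widehat{\eta}$.

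The last step is to use the \emph{squared} additive form
\[
\|\mathcal{A}-\widehat{\mathcal{A}}_{m,n}\|_{\HS}^2 \le \|\mathcal{A}-\mathcal{A}_{m,n}\|_{\HS}^2 + \sum_{i>k}\sigma_i(\mathcal{A})^2 + \widehat{\eta}^2,
\]
which is readily available from the proof of~\Cref{thm:main}: by~\Cref{lemma:discrete} the discretisation and randomized-SVD contributions decouple additively in squared Hilbert--Schmidt norm, and the proof of~\Cref{thm:InfRandSVD} directly shows that the latter contribution is bounded by $\widehat{\eta}^2$ on the high-probability event $\{X \le \widehat{\eta}^2\}$ (and by $\tfrac{k}{p-1}\sum_{i>k}\sigma_i(\mathcal{A})^2$ in expectation). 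Substituting the two evaluated quantities into this inequality and then into the chain above yields both asserted bounds. The only obstacle is conceptual rather than technical: identifying the Nyström trace-norm error as a squared randomized-SVD error in Hilbert space, and then reading off the squared (not square-rooted) form of~\Cref{thm:main} so that the right-hand side matches the additive structure $\Tr(\mathcal{T})-\Tr(W_n^*\mathcal{T}W_n)+\sum_{i>k}\sigma_i+\widehat{\eta}^2$ exactly.
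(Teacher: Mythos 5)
Your proposal is correct and follows essentially the same route as the paper: identify $\|\mathcal{T}-\widehat{\mathcal{T}}_n\|_{\T}$ with the squared Hilbert--Schmidt error of a randomized SVD of $\mathcal{A}=\mathcal{T}^{1/2}$ sketched by $\mathcal{T}^{1/2}W_n\Omega_n$, invoke~\eqref{eq:boulletownsend_inequality} with $Z_m$ spanning $\range(\mathcal{A}W_n)$, evaluate the discretization term as $\Tr(\mathcal{T})-\Tr(W_n^*\mathcal{T}W_n)$, and apply \Cref{thm:main} with the singular values $\sigma_i^{1/2}$. You even make explicit a point the paper leaves implicit, namely that one must use the squared additive bounds from the proofs of \Cref{lemma:discrete} and \Cref{thm:InfRandSVD} (rather than squaring the stated square-root form of \Cref{thm:main}) to avoid cross terms and obtain the exact right-hand side.
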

\begin{remark}
    If $\{w_i\}_{i}$ forms a complete orthonormal system for $H$, then by \eqref{eq:tracedef} $\Tr(W_n^*\mathcal{T}W_n) \to \Tr(\mathcal{T})$ as $n \to \infty$.
\end{remark}
Furthermore, \Cref{thm:convergenceexpect} naturally generalizes to the Nyström approximation. 
Noting that
\begin{equation*}
    \widehat{\mathcal{T}} = \mathcal{T}^{1/2} \mathcal{P}_{Y} \mathcal{T}^{1/2},
\end{equation*}
where $Y$ is a quasi-matrix whose columns are i.i.d. $\mathcal{N}_{H}(0,\mathcal{T})$. Hence,
\begin{equation*}
    \|\widehat{\mathcal{T}} - \widehat{\mathcal{T}}_n\|_{\HS} = \|\mathcal{T}^{1/2}(P_{Y} - P_{\mathcal{T}^{1/2}W_n \Omega_n}) \mathcal{T}^{1/2}\|_{\HS} \leq \|\mathcal{T}\|_{\T}\|P_{Y} - P_{\mathcal{T}^{1/2}W_n \Omega_n}\|_{\op}.
\end{equation*}
Following the proof of \Cref{thm:convergenceexpect} one can show that $\inf\limits_{\gamma \in \Gamma(\mu_n,\mu)} \mathbb{E}\|P_{Y} -P_{\mathcal{T}^{1/2}W_n \Omega_n}\|_{\op} \to 0$ as $n \to \infty$, where $\mu:=\mathcal{N}_H(0,\mathcal{T})$ and $\mu_n:=\mathcal{N}_H(0,\mathcal{T}^{1/2}W_nW_n^* \mathcal{T}^{1/2})$. In summary, we have the following result. 

\begin{theorem}\label{theorem:nystromconvergence}
Consider the setting of \Cref{theorem:discrete_nystrom} and assume that $\mathcal{T}$ has rank at least $k+p+2$. Then the approximations $\widehat{\mathcal{T}}$ and $\widehat{\mathcal{T}}_n$ 
produced by applying the Nyström approximation~\eqref{eq:infnystrom} with $k+p$ samples and its discrete version~\eqref{eq:discrete_nystrom}, respectively, satisfy
\[
 \inf_{\gamma \in \Gamma(\mu_n,\mu)} \mathbb E \| \widehat{\mathcal{T}} - \widehat{\mathcal{T}}_{n} \|_{\HS} \le C 
 \left[\Tr(\mathcal{T}) -\Tr(W_n^*\mathcal{T}W_n)\right],
\]
for some constant $C$ (independent of $n$), where the expectation is taken with respect to $\gamma^{\otimes(k+p)}$. If $\{w_i\}_i$ forms a complete orthonormal system of $H$ we have $\lim\limits_{n \to \infty}\left[\inf_{\gamma \in \Gamma(\mu_n,\mu)} \mathbb E \| \widehat{\mathcal{T}} - \widehat{\mathcal{T}}_{n} \|_{\HS}\right] =0.$
\end{theorem}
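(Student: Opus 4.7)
The plan is to replicate the proof of \Cref{thm:convergenceexpect} with the substitution $\mathcal{A} \leftarrow \mathcal{T}^{1/2}$. The discussion preceding the theorem already establishes the pointwise inequality $\|\widehat{\mathcal{T}} - \widehat{\mathcal{T}}_n\|_{\HS} \le \|\mathcal{T}\|_{\T}\,\|P_Y - P_{Y_n}\|_{\op}$, where $Y_n = \mathcal{T}^{1/2}W_n\Omega_n$ and the columns of $Y$ and $Y_n$ are drawn from $\mu$ and $\mu_n$ respectively. It therefore suffices to control $\inf_{\gamma}\mathbb{E}\|P_Y - P_{Y_n}\|_{\op}$ taken over couplings $\gamma \in \Gamma(\mu_n,\mu)$.

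First, I would apply the projection-perturbation bound (\Cref{lemma:projpert}) to obtain $\|P_Y - P_{Y_n}\|_{\op} \le \|Y^\dagger\|_{\op}\,\|Y - Y_n\|_{\op}$ whenever $Y$ has full column rank, and then invoke Cauchy--Schwarz to separate the two factors in expectation. Next, I would bound $\mathbb{E}\|Y^\dagger\|_{\op}^2$ by a constant. The rank-$(k+p+2)$ hypothesis on $\mathcal{T}$ forces $\sigma_{k+p+2}(\mathcal{T}^{1/2}) = \sqrt{\sigma_{k+p+2}(\mathcal{T})}>0$, and expanding the columns of $Y$ in the eigenbasis of $\mathcal{T}$ gives $U_{r+2}^* Y = \Sigma_{r+2}^{1/2}\Omega_{r+2}$ with $r = k+p$ and a standard Gaussian $\Omega_{r+2} \in \mathbb{R}^{(r+2)\times r}$; the estimate from~\cite[Lemma 3.1]{nakatsukasa2020fast} then bounds $\mathbb{E}\|Y^\dagger\|_{\op}^2$ by a constant depending only on $r$ and $\sigma_{k+p+2}(\mathcal{T})$.

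For the other factor, I would invoke \Cref{lemma:wassersteinnew} applied with $\mathcal{A} = \mathcal{T}^{1/2}$ together with the columnwise inequality $\|\cdot\|_{\op} \le \|\cdot\|_{\HS}$:
\[
\inf_{\gamma \in \Gamma(\mu_n,\mu)} \mathbb{E}\|Y - Y_n\|_{\op}^2 \le \inf_{\gamma}\mathbb{E}\|Y - Y_n\|_{\HS}^2 \le r\,\|\mathcal{T}^{1/2} - \mathcal{T}^{1/2}W_nW_n^*\|_{\HS}^2.
\]
A direct computation using Pythagoras, $\|\mathcal{T}^{1/2}\|_{\HS}^2 = \|\mathcal{T}^{1/2}W_nW_n^*\|_{\HS}^2 + \|\mathcal{T}^{1/2}(\Id - W_nW_n^*)\|_{\HS}^2$, combined with $\|\mathcal{T}^{1/2}W_nW_n^*\|_{\HS}^2 = \Tr(W_n^*\mathcal{T}W_n)$ and $\|\mathcal{T}^{1/2}\|_{\HS}^2 = \Tr(\mathcal{T})$, reduces the right-hand side to $r\bigl[\Tr(\mathcal{T})-\Tr(W_n^*\mathcal{T}W_n)\bigr]$. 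Assembling these three estimates yields a bound of the asserted form, with $C$ absorbing $r$, $\|\mathcal{T}\|_{\T}$, and the Gaussian moment constant.

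The limit statement, when $\{w_i\}_i$ is a complete orthonormal system of $H$, follows immediately from~\eqref{eq:tracedef}: $\Tr(W_n^*\mathcal{T}W_n) = \sum_{i=1}^n \langle w_i,\mathcal{T}w_i\rangle_H \uparrow \Tr(\mathcal{T})$. The main obstacle is the moment bound $\mathbb{E}\|Y^\dagger\|_{\op}^2 \le c$: one must first establish almost-sure full column rank of $Y$ by exploiting the rank hypothesis, and then reduce $\sigma_r(Y)^{-1}$ to the smallest singular value of a $(r+2) \times r$ standard Gaussian matrix through the truncated Karhunen--Lo\`eve expansion, exactly as in the proof of \Cref{thm:convergenceexpect}.
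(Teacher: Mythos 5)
Your proposal follows the paper's route exactly: the paper also writes $\widehat{\mathcal{T}}-\widehat{\mathcal{T}}_n=\mathcal{T}^{1/2}(P_Y-P_{\mathcal{T}^{1/2}W_n\Omega_n})\mathcal{T}^{1/2}$, bounds the Hilbert--Schmidt norm by $\|\mathcal{T}\|_{\T}\|P_Y-P_{Y_n}\|_{\op}$, and then simply invokes the proof of \Cref{thm:convergenceexpect} with $\mathcal{A}=\mathcal{T}^{1/2}$ (i.e.\ \Cref{lemma:projpert}, Cauchy--Schwarz, the moment bound for $\|Y^\dagger\|_{\op}$ via the rank-$(k+p+2)$ hypothesis and the Gaussian pseudoinverse estimate, and \Cref{lemma:wassersteinnew}). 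So in terms of structure and ingredients your argument is the paper's argument, with the details filled in correctly.

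One point deserves care, however: your final assembly does not literally give ``a bound of the asserted form.'' What the three estimates produce is
\[
\inf_{\gamma \in \Gamma(\mu_n,\mu)} \mathbb E \| \widehat{\mathcal{T}} - \widehat{\mathcal{T}}_{n} \|_{\HS}
\le \|\mathcal{T}\|_{\T}\,\sqrt{c\,r}\;\bigl[\Tr(\mathcal{T}) - \Tr(W_n^*\mathcal{T}W_n)\bigr]^{1/2},
\]
because \Cref{lemma:wassersteinnew} controls $\inf_\gamma\mathbb{E}\|Y-Y_n\|_{\op}^2$ by $r\,\|\mathcal{T}^{1/2}-\mathcal{T}^{1/2}W_nW_n^*\|_{\HS}^2 = r\,[\Tr(\mathcal{T})-\Tr(W_n^*\mathcal{T}W_n)]$, and Cauchy--Schwarz then forces a square root. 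A bound that is \emph{linear} in $\Tr(\mathcal{T})-\Tr(W_n^*\mathcal{T}W_n)$, as displayed in the theorem, is strictly stronger for small discretization errors and does not follow from this chain of inequalities; nothing in your write-up (nor, for that matter, in the paper's own sketch, which only asserts $\inf_\gamma\mathbb{E}\|P_Y-P_{Y_n}\|_{\op}\to 0$) bridges that gap. So you have proved the statement with the right-hand side $C\,[\Tr(\mathcal{T})-\Tr(W_n^*\mathcal{T}W_n)]^{1/2}$, which is the analogue of \Cref{thm:convergenceexpect} and certainly suffices for the limit claim via \eqref{eq:tracedef}; you should either state the bound in that square-root form or supply an additional argument if the linear dependence is really wanted. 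Aside from this (and the implicit requirement, shared with the paper, that $n$ be large enough that $Y_n$ has full column rank so that \Cref{lemma:projpert} applies), the proposal is sound.
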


\section{Numerical experiments} \label{sec:numexp}
In this section, we provide some numerical validatation of the infinite-dimensional randomized SVD and Nyström approximation presented in this work. We carry out all operations using the Chebfun software package \cite{chebfun}. 
Two examples are considered. In the first example, we use the randomized SVD to compute a low-rank approximation to the integral operator
\[
\mathcal{A} : L^2([-1,1]) \to L^2([-1,1]), \quad [\mathcal{A} f](x) = \int_{-1}^1 \kappa(x,y) f(y) \mathrm{d} y,
\]
with the kernel defined by \cite{pretty}
\begin{equation}\label{eq:airy}
    \kappa(x,y) = \mathrm{Ai}(5(x+y^2)) \mathrm{Ai}(-5(x^2+y^2)),
\end{equation}
where $\mathrm{Ai}(t) = \frac{1}{\pi} \int_0^{\infty}\cos\left(\frac{s^3}{3} + st\right)\mathrm{d}s$ denotes the Airy function. The bases defining $Z_m$ and $W_n$ are chosen as orthonormalized Legendre polynomials. We run the discrete randomized SVD with three different strategies: with fixed value $n=10$, with fixed value $n=15$, and with $n$ chosen adaptively as outlined in \Cref{sec:adaptive}. In all cases, the choice of $m$ is left to the internal Chebfun routine. We compare these approaches with the idealized version of the randomized SVD (\Cref{alg:infrsvd}) based on the ``exact'' SVD of $\mathcal A$ as computed by Chebfun. The results are presented in \Cref{fig:toy_example} and confirm our theoretical prediction that, provided $m$ and $n$ are sufficiently large, the error from the discrete randomized SVD is nearly indistinguishable from the idealized randomized SVD. In particular, the adaptive scheme performs as well as the idealized algorithm. 

\begin{figure}[htbp]
    \centering
    \begin{overpic}[width=.9\textwidth]{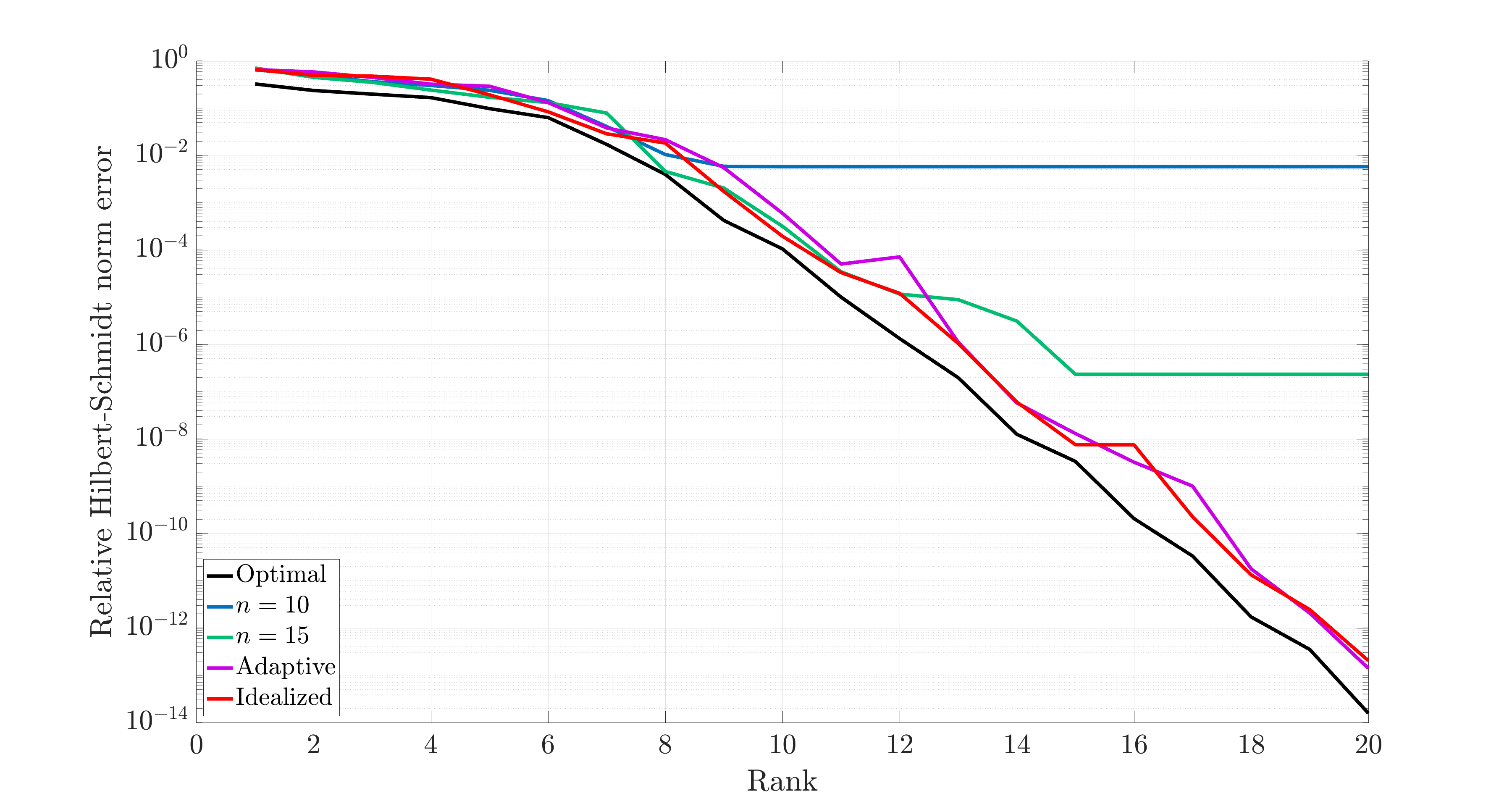}
    \end{overpic}
    \caption{Relative Hilbert--Schmidt norm errors for approximations of the integral operator defined by \eqref{eq:airy} using Legendre polynomials. In the plots for $n=10, 15$ the dimension of the co-range basis $W_n$ is fixed in advance, whereas in \emph{Adaptive} this dimension is chosen adaptively as outlined in \Cref{sec:adaptive}. \emph{Idealized} refers to the idealized algorithm outlined in \Cref{alg:infrsvd}. \emph{Optimal} denotes the optimal low-rank approximation error given by the truncated SVD.
    }
    \label{fig:toy_example}
\end{figure}

As a second experiment we consider computing a Nyström approximation of a Gaussian kernel defined on $[-1,1]^2$:
\begin{equation}\label{eq:sekernel}
    \kappa(x,y) = \exp\left(-\frac{\|x-y\|_2^2}{0.32}\right), \quad x,y \in [-1,1]^2.
\end{equation}
As discussed in \cite[Sec.~4]{perssonboullekressner2025}, obtaining an accurate Nyström approximation of such a kernel is advantageous for efficiently sampling from the associated Gaussian process. A low-rank approximation enables fast sampling, and if the approximation is accurate in the trace norm, the resulting error in the Wasserstein distance between the exact and approximate Gaussian measures is also small; see~\cite[Sec.~4]{perssonboullekressner2025}. In our experiments, we let $n = 25, 625$ and let the basis $W_n$ be a tensor product of the first $\sqrt{n}$ Legendre polynomials. We compare the method with the idealized Nyström approximation from~\eqref{eq:infnystrom}. The results are presented in \Cref{fig:gp}

\begin{figure}[htbp]
    \centering
    \begin{overpic}[width=0.9\textwidth]{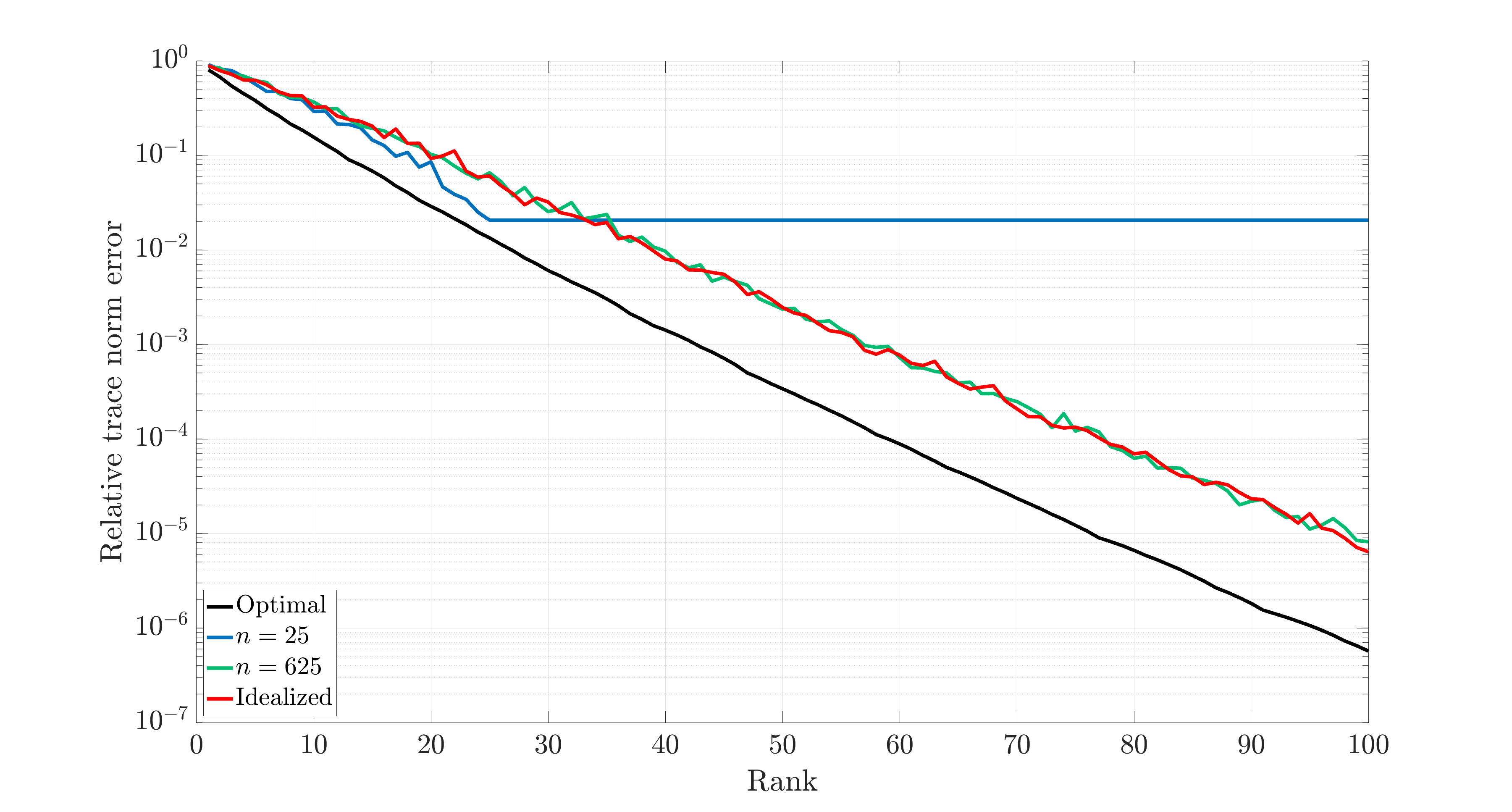}
    \end{overpic}
    \caption{Relative trace norm errors for approximations of the integral operator defined by \eqref{eq:sekernel}. \emph{Optimal} denotes the optimal low-rank approximation error given by the truncated SVD. $n$ refers to the truncation parameters from \Cref{section:finite_nystrom}. \emph{Idealized} refers to the idealized algorithm outlined in \Cref{alg:nystrom}. }
    \label{fig:gp}
\end{figure}

Interestingly, when $n = 25$ the error associated with the discrete randomized SVD is smaller than that for both $n = 625$ and the idealized Nyström approximation. This phenomenon has been noted previously and an explanation is given in \cite[Sec.~4]{perssonboullekressner2025}. When $n$ is small, the random fields have a small variance in the direction of $U_k^{\bot}$, which results in near-optimal approximations for small $k$. However, as $k$ increases, these random fields become too spatially biased, causing the error to stagnate. Moreover, when $n$ is chosen sufficiently large, the errors from the idealized and discrete Nyström approximations are nearly indistinguishable, as supported by \Cref{thm:convergenceexpect} and \Cref{theorem:nystromconvergence}. 

\rev{As a third experiment, we demonstrate how a poorly chosen covariance kernel can lead a stagnation of the approximation error in the setting of Boullé and Townsend \cite{boulle2022learning}, as discussed at the end of \Cref{sec:analysis_infrsvd}. We consider the integral operator defined by the kernel
\begin{equation}\label{eq:pretty}
    \kappa(x,y) = \frac{1}{1+100(x^2-y^2)^2}, \quad x,y \in [-1,1].
\end{equation}
The bases defining $Z_m$ and $W_n$ are chosen as orthonormalized Legendre polynomials. We perform the discrete randomized SVD, as outlined \Cref{section:fdapprox}, with $n = 258$ and $m$ is left to the internal Chebfun routine. We compare this approximation with the approach proposed by Boullé and Townsend \cite{boulle2022learning}, where the sketching vectors in $H_1$ are drawn from a Gaussian process with covariance operator $\mathcal{K}$; see also~\Cref{sec:analysis_infrsvd}. We first test a squared-exponential covariance operator with length parameter $0.02$. Specifically, we take
\begin{equation*}
    \mathcal{K}_{\text{SE}}: L^2([-1,1]) \to L^2([-1,1]), \quad [\mathcal{K}_{\text{SE}}f](x) = \frac{1}{0.02\sqrt{2\pi}}\int_{-1}^1 \exp\left(-\frac{(x-y)^2}{2\cdot0.02^2}\right)f(y)\,\mathrm{d}y.
\end{equation*}
The numerical rank of this operator, as determined by Chebfun, is $258$, and its eigenvalues decay superexponentially. To separate the effect of eigenvector alignment from the effect of eigenvector decay, we also test a synthetic covariance operator with eigendecomposition
\begin{equation*}
    \mathcal{K}_{\text{synth}} = W_n \Lambda W_n^*, \quad \Lambda_{i,i} = \exp\left(-0.25(i-1)\right), \quad i = 1,\ldots,258. 
\end{equation*}
The eigenvectors of $\mathcal{K}_{\text{synth}}$ coincides with the columns of the quasi-matrix $W_n$, but the eigenvalues decay rapidly. As shown in \Cref{fig:decay}, using $\mathcal{K}_{\text{SE}}$ and $\mathcal{K}_{\text{synth}}$ as covariance operators for the random fields leads to a clear stagnation of the approximation error. In other words, excessive smoothness in the sketch matrices can result in poor approximations. 

\begin{figure}
    \centering
    \begin{overpic}[width=0.9\textwidth]{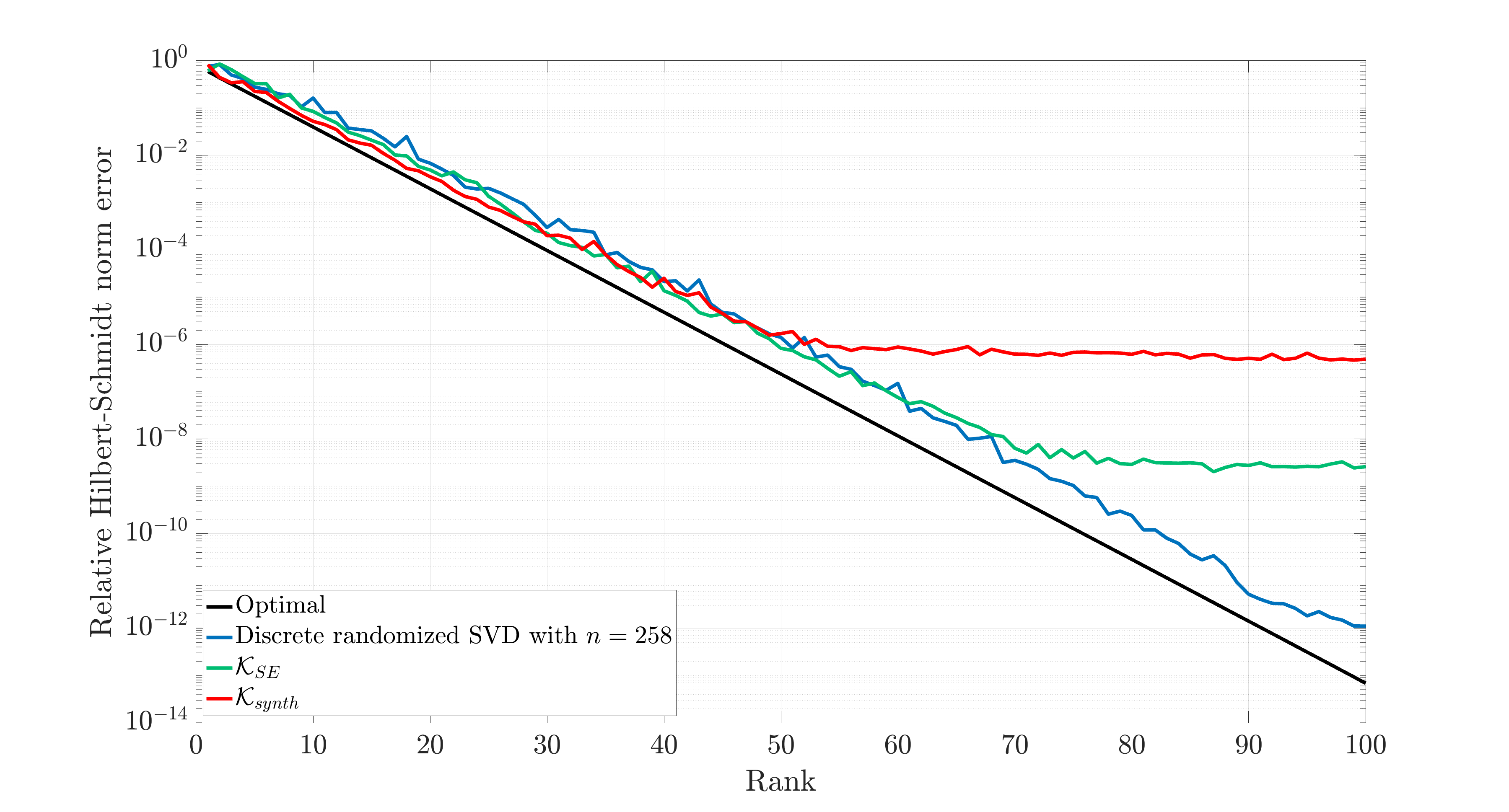}
    \end{overpic}
    \caption{\rev{Relative Hilbert-Schmidt norm errors for approximations of the integral operator defined by \eqref{eq:pretty}. \emph{Optimal} denotes the optimal low-rank approximation error given by the truncated SVD. \emph{Discrete randomized SVD with $n = 258$} corresponds to the discrete randomized SVD discussed in \Cref{section:fdapprox}. $\mathcal{K}_{\text{SE}}$ refers to the randomized SVD with random sketches drawn from $\mathcal{N}_{H_1}(0,\mathcal{K}_{\text{SE}})$. $\mathcal{K}_{\text{synth}}$ refers to the randomized SVD with random sketches drawn from $\mathcal{N}_{H_1}(0,\mathcal{K}_{\text{synth}})$.}
    % \emph{Discrete randomized SVD with $n = 258$} refers to the method outline in \Cref{section:fdapprox}. \emph{}$n$ refers to the truncation parameters from \Cref{section:finite_nystrom}. EXPLAIN MORE
    }
    \label{fig:decay}
\end{figure}

}

\section{Conclusions}

In this work we obtained error bounds for an infinite-dimensional version of the randomized SVD for Hilbert--Schmidt operators. In the idealized version of this algorithm vectors are sampled in the image of the operator using an underlying Gaussian measure induced by the operator itself. With this viewpoint it is possible to obtain error bounds completely analogous to the finite-dimensional case. When applying the randomized SVD to discretizations of the operator, we showed how to decouple the discretization error from the randomized SVD error, which allowed to prove convergence of the randomized SVDs in a suitable sense based on Wasserstein distances. The results are further generalized to the Nystr\"om approximation of trace-class operators. For the practical realization we proposed a heuristic truncation scheme for resolving the range and co-range of a given operator in a suitable finite basis expansion, which showed convincing results in numerical experiments for simple integral kernels. The current analysis is limited to discretizations of the operator based on orthonormal basis expansions, which is often not a realistic scenario for numerical approximation. Future work could therefore concentrate on the interplay between discretization error and the randomized SVD error for more general and operator-adapted bases, and investigate the implications in the context of PDE learning.

\paragraph*{Acknowledgements}

The work of A.U.~was supported by the Deutsche Forschungs\-gemeinschaft (DFG, German Research Foundation) – Projektnummer 506561557.

{\small
\bibliographystyle{siam}
\bibliography{bibliography}
}

\appendix 

\section{Perturbation of orthogonal projectors}

The proof of~\Cref{thm:convergenceexpect} relies on the following perturbation result for orthogonal projectors.
\begin{lemma} \label{lemma:projpert}
Given a Hilbert space $H$ and quasi-matrices $Y, \widehat Y: \R^{r} \to H$, let
$P_Y, P_{\widehat Y} : H \to H$ denote the orthogonal projectors onto
the ranges of $Y$ and $\widehat Y$, respectively. Assuming that both
ranges have dimension $r$, it holds that
\[
 \| P_Y - P_{\widehat Y}  \|_{\op} \le \min\big\{ \|Y^\dagger\|_{\op}, \|\widehat Y^\dagger\|_{\op}  \big\}\cdot 
 \|Y - \widehat Y\|_{\op},
\]
where $\dagger$ is used to denote the pseudo-inverse $Y^\dagger = (Y^* Y)^{-1} Y^*$.
\end{lemma}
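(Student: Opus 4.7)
The plan is to reduce the bound on $\|P_Y - P_{\hat Y}\|_{\op}$ to bounds on the two ``cross'' quantities $\|(\Id-P_Y)P_{\hat Y}\|_{\op}$ and $\|(\Id-P_{\hat Y})P_Y\|_{\op}$, and then to estimate each of these separately using the full column rank of $Y$ and $\hat Y$. For the reduction I would invoke the classical identity for two orthogonal projectors of equal finite rank $r$,
\[
\|P_Y - P_{\hat Y}\|_{\op} = \|(\Id-P_Y)P_{\hat Y}\|_{\op} = \|(\Id-P_{\hat Y})P_Y\|_{\op},
\]
both sides being the sine of the largest principal angle between $\range(Y)$ and $\range(\hat Y)$. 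A short self-contained derivation writes $(P_Y - P_{\hat Y})x = P_Y(\Id-P_{\hat Y})x - (\Id-P_Y)P_{\hat Y}x$ and uses the orthogonality of the two summands (lying in $\range(P_Y)$ and its complement) together with $\|(\Id-P_{\hat Y})x\|^2 + \|P_{\hat Y}x\|^2 = \|x\|^2$ to bound $\|P_Y - P_{\hat Y}\|_{\op}$ by the maximum of the two cross norms; the equal-rank hypothesis then forces equality of all three.

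It then suffices to bound $\|(\Id - P_Y)P_{\hat Y}\|_{\op}$ by $\|\hat Y^\dagger\|_{\op}\|Y - \hat Y\|_{\op}$, since swapping the roles of $Y$ and $\hat Y$ gives the other half of the min. Any $x \in \range(\hat Y) = \range(P_{\hat Y})$ has the form $x = \hat Y c$ with $c \in \R^r$, and the full column rank of $\hat Y$ yields $c = \hat Y^\dagger x$ and hence $\|c\| \le \|\hat Y^\dagger\|_{\op}\|x\|$. Using $(\Id - P_Y)Y = 0$,
\[
(\Id - P_Y) x = (\Id-P_Y)\hat Y c = (\Id-P_Y)(\hat Y - Y)c,
\]
and submultiplicativity with $\|\Id - P_Y\|_{\op} \le 1$ gives $\|(\Id-P_Y)x\| \le \|\hat Y^\dagger\|_{\op}\|\hat Y - Y\|_{\op}\|x\|$. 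Taking the supremum over such $x$ produces the desired cross-norm estimate.

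The only subtle ingredient is the equal-rank identity for projector differences; the remainder is routine manipulation. Since this identity is classical (see, e.g., Stewart and Sun, \emph{Matrix Perturbation Theory}), I would either cite it directly or include the short argument sketched above, rather than rederive it from scratch via the CS decomposition.
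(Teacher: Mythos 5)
Your proof is correct, but it takes a different route from the paper. The paper's own proof is a two-line reduction: it observes that everything takes place in the (at most) $2r$-dimensional subspace containing $\range(Y)$ and $\range(\hat Y)$, and then cites the finite-dimensional perturbation bound of Sun for orthogonal projectors. You instead prove the bound directly in $H$: the decomposition $(P_Y-P_{\hat Y})x = P_Y(\Id-P_{\hat Y})x - (\Id-P_Y)P_{\hat Y}x$ with orthogonal summands gives $\|P_Y-P_{\hat Y}\|_{\op}\le\max\{\|(\Id-P_Y)P_{\hat Y}\|_{\op},\|(\Id-P_{\hat Y})P_Y\|_{\op}\}$, the equal-rank hypothesis upgrades this to equality of all three quantities (symmetry of the gap / sine of the largest principal angle), and the elementary estimate $(\Id-P_Y)\hat Y c=(\Id-P_Y)(\hat Y-Y)c$ with $c=\hat Y^\dagger x$ bounds each cross norm by $\|\hat Y^\dagger\|_{\op}\|Y-\hat Y\|_{\op}$ (respectively $\|Y^\dagger\|_{\op}\|Y-\hat Y\|_{\op}$), yielding the minimum. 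This is sound; the one point to flag is that the max formula alone would only give the bound with $\max\{\|Y^\dagger\|_{\op},\|\hat Y^\dagger\|_{\op}\}$, so the equal-rank identity is genuinely where the $\min$ comes from, and that identity itself needs either a citation (Stewart--Sun, Kato) or a short principal-angle argument on $\range(Y)+\range(\hat Y)$ --- which is essentially the same finite-dimensional reduction the paper invokes, just packaged differently. What your version buys is a self-contained, Hilbert-space-native argument making the role of $Y^\dagger$, $\hat Y^\dagger$ explicit; what the paper's version buys is brevity, outsourcing the whole estimate to a known finite-dimensional result.
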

\begin{proof}
Note that the statement remains the same when the Hilbert space $H$ is replaced by a $2r$-dimensional subspace that contains the ranges of $Y, \widehat Y$. Thus, the result follows from the corresponding finite-dimensional result~\cite{Sun1984}; see also~\cite[Theorem 2.1]{ilsenogap}.
\end{proof}

\end{document}